\newtheorem{thm}{Theorem}
\newtheorem{lemm}{Lemma}[thm]
\newtheorem{prop}{Proposition}
\newtheorem{defn}{Definition}
\newtheorem{rem}{Remark}
\newcommand{\norm}[1]{\left\lVert#1\right\rVert}
\begin{document}

\title{
Stability of a Nonlocal Traffic Flow Model\\ for Connected Vehicles\thanks{Submitted to the editors
DATE. \funding{This 
work is supported in part by the NSF DMS-2012562,  DMS-1937254
and  ARO MURI Grant W911NF-15-1-0562.
}}}

\headers{Stability of a Nonlocal Traffic Flow Model}{Kuang Huang and Qiang Du}

\author{Kuang Huang\thanks{Department of Applied Physics and Applied Mathematics, 
Columbia University, New York, NY 10027; {\tt kh2862@columbia.edu}} \and Qiang Du\thanks{Department of Applied Physics and Applied Mathematics,  and Data Science Institute,
Columbia University, New York, NY 10027; {\tt qd2125@columbia.edu}}}

\maketitle

\begin{center}
{\small \color{purple} This work has been published in \emph{SIAM
Journal on Applied Mathematics}, 82(1), 221-243. Please refer to the official publication for citation.}
\end{center}

\begin{abstract}
	The emerging connected and automated vehicle technologies allow vehicles to perceive and process traffic information in a wide spatial range. Modeling nonlocal interactions between connected vehicles and analyzing their impact on traffic flows become important research questions to traffic planners. This paper considers a particular nonlocal LWR model that has been studied in the literature. The model assumes that vehicle velocities are controlled by the traffic density distribution in a nonlocal spatial neighborhood. By conducting stability analysis of the model, we obtain that, under suitable assumptions on how the nonlocal information is utilized, the nonlocal traffic flow is stable around the uniform equilibrium flow and all traffic waves dissipate exponentially. Meanwhile, improper use of the nonlocal information in the vehicle velocity selection could result in persistent traffic waves. Such results
	can shed light to the future design of driving algorithms for connected and automated vehicles.
\end{abstract}

\begin{keywords}
traffic flow, nonlocal LWR, connected vehicles, 
nonlocal gradient,  nonlocal Poincare's inequality, global stability
\end{keywords}

\begin{AMS}
35L65, \  90B20, \ 35R09, \ 47G20
\end{AMS}

\section{Introduction}\label{sec:intro}
\setcounter{equation}{0}

In transportation research, one of the central problems is to understand the collective behavior of moving vehicles. 
With the development of connected vehicle technology, vehicles on the road, such as those traveling on the highway, can be connected through some vehicle-to-vehicle (V2V) or vehicle-to-infrastructure (V2I) communication networks \cite{dey2016vehicle}. As a result, each vehicle perceives nonlocal information on the road. The enhanced access to traffic information brings new opportunities and challenges on many aspects of traffic flows, ranging from traffic management, communication infrastructure and protocols to vehicle design and control. Theoretical studies and modeling efforts are also in great need \cite{huang2020scalable}. On one hand, new models are imperative to study how nonlocal information affects traffic flows and to explore the emergence of new traffic phenomena; On the other hand, car manufacturers will face the problem of designing driving algorithms to guide  connected vehicles. This is an interactive and iterative process: a good algorithm is expected to utilize nonlocal information to improve the traffic, and  at the same time, a good model can guide the algorithm design. 

On the macroscopic level, traffic flows on highways have been modeled via continuum descriptions and hyperbolic conservation laws \cite{lighthill1955kinematic,richards1956shock,payne1971model,aw2000resurrection} similar to models of continuum media. Our present study focuses on such continuum descriptions of the dynamics of vehicle densities on a ring road. The main aim of this work is a mathematical demonstration of how  nonlocal information can be utilized to gain desired benefits. By conducting stability analysis of a nonlocal macroscopic traffic flow model, we offer evidence of traffic wave stabilization with proper usage of nonlocal vehicle density information in velocity control.

\subsection{Background mathematical models}\label{sec:background}
 
A common block of macroscopic traffic flow models is the continuity equation:
\begin{align}
	\partial_t\rho(x,t)+\partial_x\left(\rho(x,t)u(x,t)\right)=0,\label{eq:continuity}
\end{align}
that describes the conservation of vehicles, where $\rho(x,t)$ and $u(x,t)$ denote the aggregated traffic density and velocity and $x$ and $t$ are spatial and temporal coordinates.  The Lighthill-Whitham-Richards (LWR) model \cite{lighthill1955kinematic,richards1956shock} is the most extensively used macroscopic traffic flow model. It assumes a fundamental relation:
\begin{align}
 	u(x,t)=U\left(\rho(x,t)\right),\label{eq:relation}
\end{align}
between traffic density and velocity, meaning that the driving speed of a vehicle is determined only by the instantaneous density at the vehicle's current
location. The function $U(\cdot)$ is also referred to as the \emph{desired speed function}.
The LWR model follows from \eqref{eq:continuity}\eqref{eq:relation} as a scalar conservation law:
\begin{align}
	\partial_t\rho(x,t)+\partial_x\left(\rho(x,t) U\left(\rho(x,t)\right)\right)=0.\label{eq:lwr}
\end{align}
The LWR model \eqref{eq:lwr} may produce shock wave solutions even with smooth initial data. Such shock wave solutions qualitatively explain the formation and propagation of traffic jams.

\subsection{Nonlocal LWR model}\label{sec:nonlocal}

The main objective of this paper is to consider the asymptotic stability of a nonlocal  extension to the LWR model \eqref{eq:lwr}, proposed by \cite{Blandin2016,goatin2016well}. The basic assumption underneath such a nonlocal model is that each vehicle perceives traffic density information in a road segment of length $\delta>0$ ahead of the vehicle's current location. The driving speed of the vehicle is then based on an weighted average of density within the road segment:
\begin{align}
	u(x,t)=U\left(\int_0^\delta\rho(x+s,t)w_\delta(s)\,ds\right),\label{eq:nonlocal_relation}
\end{align}
where the nonlocal kernel $w_\delta(\cdot)$ characterizes the nonlocal effect.
\eqref{eq:continuity}\eqref{eq:nonlocal_relation} lead to the following nonlocal LWR model:
\begin{align}
	\partial_t\rho(x,t)+\partial_x\left(\rho(x,t) U\left(\int_0^\delta\rho(x+s,t)w_\delta(s)\,ds\right)\right)=0.\label{eq:nonlocal_lwr}
\end{align}

In the existing studies, some theoretical and numerical results have been developed on the scalar nonlocal conservation law \eqref{eq:nonlocal_lwr}, see Section~\ref{sec:related_work} for a review. 
However, existing studies on the asymptotic stability of the model are still limited.
Under some suitable assumptions, we show that the stability is closely related to the nonlocal kernel $w_\delta(\cdot)$. In particular, {we prove that the solution of the model exponentially converges to a constant density as $t\to\infty$ when the kernel $w_\delta(\cdot)$ is non-increasing and non-constant}.
Meanwhile, a constant kernel may lead to traffic waves that persist in time.

To interpret the significance of the mathematical findings made in this work, let us note their connections to
issues that are  important in real traffic situation.  In the traffic research community,
it is widely recognized that the presence of traffic waves could result in
 elevated risks for traffic safety, an increase in vehicle
fuel consumption, as well as a reduction in total traffic throughput \cite{stern2018dissipation}. Thus, the dissipation of traffic waves and the stability of constant density states are features that can offer benefits to both drivers of individual vehicles and the whole traffic ecosystem.
The particular mathematical results established here,
in plain words, provide further  evidence to the following natural principle when designing driving algorithms for connected vehicles: \emph{it can be beneficial to utilize nonlocal interactions between connected vehicles for traffic decisions, and while doing so, suitable forms and ranges of nonlocality should be adopted with nearby information deserving more attention.}

\subsection{Related work}\label{sec:related_work}

The nonlocal LWR model \eqref{eq:nonlocal_lwr} was first proposed in \cite{Blandin2016,goatin2016well}, where the existence, uniqueness and maximum principle of the weak entropy solution were proved using the Lax-Friedrichs numerical approximation. The entropy condition is adopted to ensure the solution uniqueness.
In subsequent works, \cite{Chiarello2018} proved the same results for a generalized model of \eqref{eq:nonlocal_lwr}:
\begin{align}
	\partial_t\rho(x,t)+\partial_x\left(g(\rho(x,t)) U\left(\int_0^\delta\rho(x+s,t)w_\delta(s)\,ds\right)\right)=0,\label{eq:generalize_1}
\end{align}
and \cite{Chalons2018} developed high-order numerical schemes to solve \eqref{eq:generalize_1}.
In a related work, \cite{keimer2017existence} studied a family of nonlocal balance laws:
\begin{align*}
	\partial_t\rho(x,t)+\partial_x\left(\rho(x,t)U\left(\int_{a(x)}^{b(x)}w_1(x,y,t)\rho(y,t)\,dy\right)\right)=h(x,t),
\end{align*}
which include \eqref{eq:nonlocal_lwr} as a special case. The existence and uniqueness of the weak solution were proved using the method of characteristics and a fixed-point argument. The latter also leads to solution uniqueness without the use of the entropy condition.

In the existing studies, some analytic properties of the nonlocal LWR model \eqref{eq:nonlocal_lwr} were discussed. In terms of solution regularity, \cite{bressan2019traffic} showed that the solution of \eqref{eq:nonlocal_lwr} has the same regularity as the initial data when: (i) the nonlocal kernel $w_\delta(\cdot)$ is $\mathbf{C}^1$ smooth and non-increasing on $[0,+\infty)$ with the zero extension on $[\delta,+\infty)$; (ii) the desired speed function $U(\cdot)$ is $\mathbf{C}^2$ smooth and $U'\leq-c<0$ for some constant $c$. In contrast, the local LWR model \eqref{eq:lwr} can develop shock solutions from smooth initial data whenever the characteristics impinge each other.  
In terms of relations between the local and nonlocal models, one fundamental question is whether the solution of the nonlocal model \eqref{eq:nonlocal_lwr} converges to that of the local model \eqref{eq:lwr} when $\delta\to0$, i.e., the vanishing nonlocality limit. In \cite{colombo2019singular}, it was shown that such convergence is in general false with a demonstration based on an example associated with the desired speed function $U(\rho)=\rho$ and discontinuous initial data. Nevertheless, convergence results were given in \cite{keimer2019approximation} when $U(\cdot)$ is a decreasing function and the initial data is monotone. \cite{bressan2019traffic,bressan2020entropy} considered a special case where the nonlocal interaction range is infinite and the nonlocal kernel is exponential, which leads to 
$$u(x,t)=U\left(\int_0^\infty\delta^{-1} e^{-\frac{s}{\delta}}\rho(x+s,t)\,ds\right).$$
In this case, 
the nonlocal-to-local convergence as $\delta\to0$ was proved for uniformly positive initial data and any desired speed function $U(\cdot)$ that is $\mathbf{C}^2$ smooth and $U'\leq-c<0$ for some constant $c$. \cite{colombo2018blow} extended the convergence results for exponentially decaying kernels and a family of decreasing desired speed functions, but required the initial data to be uniformly positive and have no negative jumps. It is still an open problem what are sufficient and necessary conditions on the desired speed function, nonlocal kernel and initial data for the vanishing nonlocality limit to be true.
In terms of asymptotic behavior, \cite{ridder2019traveling} gave a class of monotone stationary solutions of \eqref{eq:nonlocal_lwr} on an infinitely long road and showed that those solutions are asymptotic local attractors of \eqref{eq:nonlocal_lwr}. \cite{karafyllis2020analysis} studied a generalized model of \eqref{eq:nonlocal_lwr} where a nudging (or ``look behind'') term is added to the nonlocal velocity:
\begin{align*}
	u(x,t)=U\left(\int_0^\delta\rho(x+s,t)w_\delta(s)\,ds\right)\tilde{U}\left(\int_0^{\tilde{\delta}}\rho(x-s,t)\tilde{w}_{\tilde{\delta}}(s)\,ds\right).
\end{align*}
Under the assumptions that: (i) the model is solved on a ring road; (ii) $U(\cdot)$ is decreasing and $\tilde{U}(\cdot)$ is increasing; (iii) $w_\delta(s)=1/\delta$; (iv) $\tilde{\delta}$ is the length of the ring road and $\tilde{w}_{\tilde{\delta}}(s)=(\tilde{\delta}-s)/\tilde{\delta}$, the local exponential stability of uniform equilibrium flows as $t\to\infty$ was proved. 

Let us also briefly mention other relevant studies. The nonlocal LWR model \eqref{eq:nonlocal_lwr} has been generalized to the case for 1-to-1 junctions \cite{chiarello2019junction} and of multi-class vehicles \cite{chiarello2019non,chiarello2020lagrangian}.
There are also nonlocal traffic flow models other than \eqref{eq:nonlocal_lwr}. In \cite{sopasakis2006stochastic}, a model based on Arrhenius ``look-ahead'' dynamics was proposed where the nonlocal velocity:
\begin{align*}
	u(x,t)=U\left(\rho(x,t)\right)\exp\left(-\int_0^\delta \rho(x+s,t)w_\delta(s)\,ds\right),
\end{align*}
\cite{Lee2015,Lee2019a,Lee2019b} analyzed shock formation criteria of the model; In \cite{chiarello2020micro}, a nonlocal extension to the traditional Aw-Rascle-Zhang model \cite{aw2000resurrection} was proposed and the micro-macro limit was demonstrated.
More broadly, nonlocal models have been drawing increasing attention in our connected world \cite{du2019nonlocal}.  Nonlocal conservation laws, in particular, have been studied in many other applications, e.g., pedestrian traffic \cite{Colombo2012,burger2020non}, sedimentation \cite{Betancourt2011} and material flow on conveyor belts \cite{Goettlich2014,rossi2020well}, see \cite{Colombo2016} for a review. 
\cite{du2012new,du2017nonlocal,du2017numerical} discussed nonlocal conservation laws inspired from discrete descriptions of local conservation laws. 
Some more analytical and numerical studies on nonlocal conservation laws can be found in \cite{Aggarwal2015,Amorim2015,Colombo2018,Goatin2019,Chiarello2019,Berthelin2019}.

\subsection{Main results}\label{sec:main_results}

Before the rigorous statement of the main results of this paper, let us specify the set-up of the model problem. First of all, we consider the problem on a ring road. Mathematically, we use the spatial domain $x\in[0,1]$ to represent the ring road and assume the periodic boundary condition for the equation \eqref{eq:nonlocal_lwr}.
\begin{itemize}
	\item[({\bf A1})] {$\rho(0,t)=\rho(1,t)$,\quad $\forall t\geq0$}.
\end{itemize}

The periodicity assumption is common in stability analysis of traffic flow models and fits the scenarios in field experiments \cite{sugiyama2008traffic,stern2018dissipation}. The nonlocal LWR model \eqref{eq:nonlocal_lwr} is solved with the periodic boundary condition and the following initial condition:
\begin{align}
	\rho(x,0)=\rho_0(x),\quad x\in[0,1],\label{eq:ini_data}
\end{align}
where $\rho_0$ is a nonnegative density distribution in $\mathbf{L}^\infty([0,1])$.
We denote:
\begin{align}
	\bar{\rho}=\int_0^1 \rho_0(x)\,dx,\label{eq:rho_bar}
\end{align}
the average density of all vehicles on the ring road. Given $\bar{\rho}$, there is a constant solution of \eqref{eq:nonlocal_lwr}:
\begin{align}
	\rho(x,t)\equiv\bar{\rho}.\label{eq:uniform_sol}
\end{align}
This constant solution, which is an equilibrium of the dynamics described by the nonlocal LWR model \eqref{eq:nonlocal_lwr}, represents the \emph{uniform flow} in traffic where all vehicles are uniformly distributed and drive at the same speed.

We then make the following assumptions on the desired speed function $U(\cdot)$ and the nonlocal kernel $w_\delta(\cdot)$.
\begin{itemize}
	\item[({\bf A2})] $U(\rho)=1-\rho$.
\end{itemize}

The linear desired speed function $U(\rho)=1-\rho$, usually referred to as the Greenshields speed-density relationship \cite{greenshields1935study}, is widely used in traffic flow modeling. We make the assumption ({\bf A2}) to simplify the problem because in this case \eqref{eq:nonlocal_lwr} can be rewritten as:
\begin{align}
	\partial_t\rho(x,t)+\partial_x\left(\rho(x,t)\left(1-\rho(x,t)\right)\right)=\nu(\delta)\partial_x\left(\rho(x,t)\mathcal{D}_x^\delta\rho(x,t)\right),\label{eq:nonlocal_diffusion}
\end{align}
where:
\begin{align}
	\mathcal{D}_x^\delta\rho(x,t)=\frac{1}{\nu(\delta)}\int_0^\delta \left[\rho(x+s,t)-\rho(x,t)\right]w_\delta(s)\,ds,\quad\text{and}\quad\nu(\delta)=\int_0^\delta sw_\delta(s)\,ds.\label{eq:nonlocal_grad}
\end{align}

The equation \eqref{eq:nonlocal_grad} defines the one-sided nonlocal gradient operator $\mathcal{D}_x^\delta$ \cite{dyz17dcdsb,dtty2016cmame}. 
In \eqref{eq:nonlocal_grad}, the integration is defined with respect to 
periodicity of the density function. The formulation \eqref{eq:nonlocal_diffusion} reinterprets the nonlocal LWR model \eqref{eq:nonlocal_lwr} as the local one \eqref{eq:lwr} with an additional  term that may provide some form of nonlocal diffusion for a suitably chosen kernel $w_\delta(\cdot)$. 
A sufficient condition is provided in the following assumption.
 
\begin{itemize}
	\item[({\bf A3})] $w_\delta(\cdot)$ is a $\mathbf{C}^1$ function defined on $[0,\delta]$, satisfying 
	$$w_\delta(s)\geq0, \; \forall s\in[0,\delta]\quad\text{ and }\quad \int_0^\delta w_\delta(s)\,ds=1.$$
	In addition, $w_\delta(\cdot)$ is non-increasing and non-constant on $[0,\delta]$.
\end{itemize}

The assumption ({\bf A3}) is the key to the main findings of this paper. It is the mathematical reformulation of the natural design principle that density information of nearby vehicles should deserve more attention. Under this assumption, we can deduce that the nonlocal LWR model \eqref{eq:nonlocal_lwr} indeed adds appropriate nonlocal diffusion effect to the local one \eqref{eq:lwr} through a direct spectral analysis, see Section~\ref{sec:spectral_estimate}.  
More precisely, we will show the following linear stability result.

\begin{thm}\label{thm:linear}
Under the assumptions \textup{({\bf A1})\,-\,({\bf A3})},
the uniform flow solution defined by \eqref{eq:uniform_sol} is linearly asymptotically stable for any $\bar{\rho}>0$.
\end{thm}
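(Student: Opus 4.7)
The plan is to linearize the nonlocal diffusion form \eqref{eq:nonlocal_diffusion} around $\rho \equiv \bar{\rho}$ and diagonalize the resulting linear, translation-invariant operator via Fourier series on the periodic interval $[0,1]$. Writing $\rho = \bar{\rho} + \eta$, dropping the quadratic term $\eta\,\mathcal{D}_x^\delta\eta$, and using $\mathcal{D}_x^\delta \bar{\rho} = 0$ yields
\begin{equation*}
\partial_t \eta + (1 - 2\bar{\rho}) \partial_x \eta = \bar{\rho}\,\partial_x \int_0^\delta \bigl[\eta(x+s,t) - \eta(x,t)\bigr] w_\delta(s)\,ds.
\end{equation*}
Since each mode $e^{2\pi i k x}$ ($k \in \mathbb{Z}$) is an eigenfunction of both sides, stability reduces to the sign of the real part of the dispersion relation
\begin{equation*}
\sigma_k = -2\pi i k (1 - 2\bar{\rho}) + 2\pi i k\,\bar{\rho} \int_0^\delta \bigl(e^{2\pi i k s} - 1\bigr) w_\delta(s)\,ds .
\end{equation*}
The convective first term is purely imaginary, leaving $\operatorname{Re}(\sigma_k) = -2\pi k\,\bar{\rho} \int_0^\delta \sin(2\pi k s)\,w_\delta(s)\,ds$. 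Linear asymptotic stability in $\mathbf{L}^2$ will then follow from Plancherel once a uniform spectral gap $\sup_{k \neq 0} \operatorname{Re}(\sigma_k) \leq -\gamma < 0$ is secured.

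The central analytic step, where assumption \textup{(\textbf{A3})} enters, is to prove
\begin{equation*}
I_k := k \int_0^\delta \sin(2\pi k s)\,w_\delta(s)\,ds > 0 \quad \text{for every } k \in \mathbb{Z}\setminus\{0\}.
\end{equation*}
By symmetry $I_{-k} = I_k$, so take $k \geq 1$. The natural tool is integration by parts against the nonnegative antiderivative $F(s) = (1-\cos(2\pi k s))/(2\pi k)$, which vanishes only on the finite set $\{n/k\} \cap [0,\delta]$. This produces
\begin{equation*}
\int_0^\delta \sin(2\pi k s)\,w_\delta(s)\,ds = F(\delta) w_\delta(\delta) - \int_0^\delta F(s)\,w_\delta'(s)\,ds ,
\end{equation*}
and both terms are nonnegative because $F,w_\delta \geq 0$ and $w_\delta' \leq 0$. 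Strict positivity follows by contradiction: vanishing of the integral would force $F(s)w_\delta'(s) = 0$ almost everywhere, hence $w_\delta'(s) = 0$ where $F > 0$, and by continuity $w_\delta' \equiv 0$, contradicting the non-constancy clause of \textup{(\textbf{A3})}.

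To upgrade this pointwise negativity to a uniform gap, I would perform a second integration by parts (with antiderivative $-\cos(2\pi k s)/(2\pi k)$) to extract
\begin{equation*}
\operatorname{Re}(\sigma_k) = -\bar{\rho}\bigl[w_\delta(0) - w_\delta(\delta)\cos(2\pi k \delta)\bigr] - \bar{\rho}\int_0^\delta \cos(2\pi k s)\,w_\delta'(s)\,ds .
\end{equation*}
The bracket is bounded above by $-\bar{\rho}[w_\delta(0) - w_\delta(\delta)] < 0$, strict because a continuous, non-increasing, non-constant $w_\delta$ must satisfy $w_\delta(0) > w_\delta(\delta)$; the remaining Fourier integral tends to zero as $|k| \to \infty$ by the Riemann--Lebesgue lemma. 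Hence $\operatorname{Re}(\sigma_k) \leq -\bar{\rho}[w_\delta(0)-w_\delta(\delta)]/2$ for all sufficiently large $|k|$, while the finitely many remaining low modes are controlled by the first integration-by-parts step, delivering a single $\gamma > 0$. The main obstacle I anticipate is precisely this strict-inequality step for every $k$: a direct pointwise sign analysis of $\sin(2\pi k s)$ on $[0,\delta]$ fails whenever $k > 1/(2\delta)$, so the monotonicity of $w_\delta$ encoded in \textup{(\textbf{A3})} must be leveraged through the integration-by-parts identity rather than exploited pointwise.
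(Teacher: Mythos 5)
Your proposal is correct and follows essentially the same route as the paper: linearize around $\bar{\rho}$, diagonalize in Fourier modes, and establish the uniform spectral gap $\inf_{k\geq1}2\pi k b_\delta(k)>0$ by integration by parts against the monotone kernel combined with the Riemann--Lebesgue lemma for large $|k|$ (this is exactly Lemma~\ref{lem:spectra}). Your contradiction argument for strict positivity of each $I_k$, using the nonnegative antiderivative $F(s)=(1-\cos(2\pi ks))/(2\pi k)$ and the non-constancy clause of ({\bf A3}), is a slightly more explicit rendering of the same step.
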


Naturally,  for the nonlinear nonlocal system,
it is interesting to see if we can extend the linear stability to get global nonlinear stability.  For this, we make one additional assumption on the initial data.

\begin{itemize}
	\item[({\bf A4})] There exist $0<\rho_{\text{min}}\leq\rho_{\text{max}}\leq1$ such that:
\begin{align*}
	\rho_{\text{min}}\leq\rho_0(x)\leq\rho_{\text{max}},\quad\forall x\in[0,1].
\end{align*}
\end{itemize}

With all above assumptions, we are ready to state the  well-posedness of the weak solution as defined below.

\begin{defn}\label{defn:weak_sol}
	$\rho\in\mathbf{C}\left([0,\infty);\,\mathbf{L}^1\left([0,1]\right)\right)\cap\mathbf{L}^\infty\left([0,1]\times[0,\infty)\right)$ is a weak solution of \eqref{eq:nonlocal_lwr} with the initial condition \eqref{eq:ini_data} and the periodic boundary condition, if:
	\begin{align*}
		\int_0^\infty\int_0^1\rho(x,t)\partial_t\phi(x,t)+\rho(x,t)U\left(\int_0^\delta\rho(x+s,t)w_\delta(s)\,ds\right)\partial_x\phi(x,t)\,dxdt+\int_0^1\rho_0(x)\phi(x,0)\,dx=0,
	\end{align*}
	for all $\phi\in\mathbf{C}^1\left([0,1]\times[0,\infty)\right)$ periodic in space and having compact support.
\end{defn}

The well-posedness theorem follows from \cite{keimer2019approximation}. Even though the spatial domain considered in that work is set to be the real line $\mathbb{R}$, the same arguments work with little modifications for the periodic case.  

\begin{thm}\label{thm:weak_solution}
	Under the assumptions \textup{({\bf A1})\,-\,({\bf A4})}, the nonlocal LWR model \eqref{eq:nonlocal_lwr} admits a unique weak solution in the sense of Definition~\ref{defn:weak_sol}, and the solution satisfies:
	\begin{align*}
		\rho_{\mathrm{min}}\leq\rho(x,t)\leq\rho_{\mathrm{max}},\quad\forall x\in[0,1],\ t\geq0.
	\end{align*}
\end{thm}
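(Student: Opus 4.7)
The plan is to invoke the proof strategy of \cite{keimer2019approximation} and indicate the modifications needed for the periodic setting. I would first recast the equation as a fixed-point problem: for a candidate density $\sigma \in \mathbf{C}([0,T];\,\mathbf{L}^1([0,1]))$ satisfying $\rho_{\mathrm{min}}\leq\sigma\leq\rho_{\mathrm{max}}$ in the periodic sense, define the (nonnegative) nonlocal velocity
\[
V[\sigma](x,t) \;=\; 1 - \int_0^\delta \sigma(x+s,t)\,w_\delta(s)\,ds,
\]
and let $\tilde\rho = \mathcal{T}[\sigma]$ be the solution of the linear conservation law $\partial_t\tilde\rho + \partial_x(\tilde\rho\,V[\sigma]) = 0$ with $\tilde\rho(\cdot,0)=\rho_0$. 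Since (A4) forces $V[\sigma]\in[1-\rho_{\mathrm{max}},\,1-\rho_{\mathrm{min}}]$ and (A3) makes $V[\sigma]$ of class $\mathbf{C}^1$ in $x$, the method of characteristics on the torus yields the explicit representation
\[
\tilde\rho\bigl(X(t;x_0),t\bigr) \;=\; \rho_0(x_0)\,\exp\!\left(-\int_0^t \partial_x V[\sigma]\bigl(X(s;x_0),s\bigr)\,ds\right),
\]
where $X(\cdot;x_0)$ is the characteristic through $x_0$ at $t=0$. Fixed points of $\mathcal{T}$ coincide with weak solutions in the sense of Definition~\ref{defn:weak_sol}.

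Next I would show that $\mathcal{T}$ is a contraction on $\mathbf{C}([0,T];\,\mathbf{L}^1([0,1]))$ for sufficiently small $T$, using the Lipschitz bound $\|V[\sigma_1]-V[\sigma_2]\|_{\mathbf{L}^\infty}\leq\|w_\delta\|_{\mathbf{L}^\infty}\|\sigma_1-\sigma_2\|_{\mathbf{L}^1}$ together with standard $\mathbf{L}^1$-stability for linear transport equations with Lipschitz velocity fields. Banach's theorem then yields a unique local-in-time weak solution, which extends to all $t\geq 0$ once the $\mathbf{L}^\infty$ bound is shown not to degenerate. That uniform-in-time bound comes from the maximum principle, which at the smooth level is argued at a maximizing point $x_0$ of $\rho(\cdot,t)$: there $\partial_x\rho(x_0)=0$, so $\partial_t\rho(x_0)=-\rho(x_0)\,\partial_x V(x_0)$. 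Integration by parts in $s$ against the $\mathbf{C}^1$ kernel gives
\[
\partial_x V(x,t) \;=\; w_\delta(0)\rho(x,t) - w_\delta(\delta)\rho(x+\delta,t) + \int_0^\delta \rho(x+s,t)\,w_\delta'(s)\,ds,
\]
and using $\rho(x_0+s)\leq\rho(x_0)$ together with $w_\delta'\leq 0$ from (A3) shows $\partial_x V(x_0)\geq 0$, so $\partial_t\rho(x_0)\leq 0$. A symmetric argument at a minimizer preserves the lower bound $\rho\geq\rho_{\mathrm{min}}$.

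The adaptation from $\mathbb{R}$ to the torus $[0,1]/{\sim}$ is largely cosmetic: the convolution is already interpreted periodically (as noted after \eqref{eq:nonlocal_grad}) and characteristics are followed modulo one, so no decay-at-infinity arguments from \cite{keimer2019approximation} are needed. The main obstacle is the simultaneous two-sided maximum principle, since $\partial_x V$ has no definite sign in general and the raw characteristic formula by itself does not preserve bounds; this is precisely where the monotonicity hypothesis on $w_\delta$ in (A3) is crucial, as it is what makes the viscosity-style sign check above succeed. At the level of weak solutions this pointwise argument is carried out on smooth approximations (obtained, e.g., by mollifying $\rho_0$) and passed to the limit using the $\mathbf{L}^1$ stability above. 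Because every estimate is local in $x$ and depends only on (A3)--(A4), the argument of \cite{keimer2019approximation} then transfers to the periodic case essentially verbatim.
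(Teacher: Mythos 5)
Your proposal follows the same route as the paper, which simply defers to the fixed-point/characteristics construction of \cite{keimer2019approximation} and notes that only minor modifications are needed for the periodic setting; your reconstruction of that argument (contraction in $\mathbf{C}([0,T];\mathbf{L}^1)$, characteristic representation, and the two-sided maximum principle via the sign of $\partial_x V$ at extrema using $w_\delta'\leq 0$) is correct, and in particular the key computation $\partial_x V(x_0)\geq w_\delta(\delta)\bigl(\rho(x_0)-\rho(x_0+\delta)\bigr)\geq 0$ at a maximizer checks out. No substantive gap relative to what the paper itself provides.
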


Although the weak solution always exists, it can be discontinuous. In this paper, the stability analysis is based upon an energy estimate. To do that, we make a regularity assumption on the weak solution of \eqref{eq:nonlocal_lwr}.
\begin{itemize}
	\item[({\bf A5})] The weak solution $\rho\in\mathbf{C}^1\left([0,1]\times[0,\infty)\right)$.
\end{itemize}

The assumption ({\bf A5}) is equivalent to say that $\rho$ is the classical solution of \eqref{eq:nonlocal_lwr}. 
When the assumptions ({\bf A1})\,-\,({\bf A4}) are true, \cite{bressan2019traffic} proved a sufficient condition for the assumption ({\bf A5}): the initial data $\rho_0$ is $\mathbf{C}^1$ smooth, and the nonlocal kernel $w_\delta(\cdot)$ is $\mathbf{C}^1$ smooth on $[0,+\infty)$ with the zero extension $w_\delta(s)=0$ on $s\in[\delta,+\infty)$.

Now we are in position to state the main results of this paper.

\begin{thm}\label{thm:main}
	Under the assumptions \textup{({\bf A1})\,-\,({\bf A5})}, and suppose $\rho(x,t)$ is the solution of the nonlocal LWR model \eqref{eq:nonlocal_lwr}. Then there exists a constant $\lambda>0$ that only depends on $\delta$, $w_\delta(\cdot)$ and $\rho_{\mathrm{min}}$, such that:
	\begin{align} \label{eq:main}
		\norm{\rho(\cdot,t)-\bar{\rho}}_{\mathbf{L}^2}\leq e^{-\lambda t}\norm{\rho_0-\bar{\rho}}_{\mathbf{L}^2},\quad\forall t\geq0,
	\end{align}
	where $\bar{\rho}$ is given by \eqref{eq:rho_bar}.
	As a corollary, $\rho(\cdot,t)$ converges to $\bar{\rho}$ in $\mathbf{L}^2\left([0,1]\right)$ as $t\to\infty$.
\end{thm}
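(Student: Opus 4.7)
The plan is to carry out an $L^2$ energy estimate on the deviation $\eta(x,t):=\rho(x,t)-\bar\rho$. Conservation of mass together with periodicity forces $\int_0^1\eta(\cdot,t)\,dx=0$ for all $t\geq 0$, which is essential for any Poincar\'e-type bound. With $E(t):=\tfrac12\|\eta(\cdot,t)\|_{L^2([0,1])}^2$, the estimate \eqref{eq:main} is equivalent to $E(t)\leq e^{-2\lambda t}E(0)$, and by Gr\"onwall it suffices to prove the differential inequality $E'(t)\leq -2\lambda E(t)$.

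To derive the energy identity I would multiply the diffusive reformulation \eqref{eq:nonlocal_diffusion} by $\eta$, integrate over $[0,1]$, and integrate by parts once in $x$; the $C^1$ regularity from (A5) and periodicity make this rigorous. The Burgers-type flux $\partial_x(\rho(1-\rho))$ contributes nothing: after substituting $\rho=\bar\rho+\eta$ and expanding $\rho(1-\rho)$, every surviving term in the integrand is a total $x$-derivative of $\eta$, $\eta^2$, or $\eta^3$, each of which integrates to zero by periodicity. One is left with
\begin{equation*}
E'(t)\;=\;-\nu(\delta)\int_0^1 \rho(x,t)\,\partial_x\eta(x,t)\,\mathcal{D}_x^\delta\eta(x,t)\,dx.
\end{equation*}

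The crux of the proof is a \emph{nonlocal Poincar\'e inequality}, flagged in the paper's keywords and to be developed via the spectral analysis in Section~\ref{sec:spectral_estimate}: under (A3) there exists $\lambda_0=\lambda_0(\delta,w_\delta)>0$ such that every $C^1$ periodic zero-mean $v$ satisfies
\begin{equation*}
\int_0^1 \partial_x v(x)\,\mathcal{D}_x^\delta v(x)\,dx \;\geq\; \lambda_0\,\|v\|_{L^2}^2.
\end{equation*}
By Parseval this is equivalent to a uniform positive lower bound on the Fourier symbol $k\mapsto \tfrac{4\pi k}{\nu(\delta)}\int_0^\delta w_\delta(s)\sin(2\pi ks)\,ds$ for $k\in\mathbb{Z}_{>0}$. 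The small-$k$ regime is handled by Taylor-expanding $\sin$, recovering a local-diffusion-type lower bound. For large $k$ a single integration by parts yields the leading term $\tfrac{2(w_\delta(0)-w_\delta(\delta))}{\nu(\delta)}$, which is strictly positive precisely because $w_\delta$ is non-increasing and non-constant, with the oscillatory residual controlled by $\int_0^\delta|w_\delta'|\,ds$ via Riemann-Lebesgue. I expect this spectral gap to be the main obstacle: it is where the qualitative design principle (A3) is converted into a quantitative decay rate, and it is also what distinguishes the stabilizing kernels from the constant kernel (for which the leading boundary term vanishes).

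Combining the Poincar\'e inequality with the energy identity, I would split $\rho=\bar\rho+\eta$ to get $E'(t)=-\nu(\delta)\bar\rho\,I_1(\eta)-\nu(\delta)I_2(\eta)$, where $I_1:=\int\partial_x\eta\,\mathcal{D}_x^\delta\eta\,dx\geq\lambda_0\|\eta\|_{L^2}^2$ is the dominant dissipation and $I_2:=\int\eta\,\partial_x\eta\,\mathcal{D}_x^\delta\eta\,dx$ is a cubic remainder. Using the uniform bound $\|\eta(\cdot,t)\|_{L^\infty}\leq\rho_{\max}-\rho_{\min}$ from Theorem~\ref{thm:weak_solution} together with the symmetrization $I_2=-\tfrac12\int\eta^2\,\partial_x\mathcal{D}_x^\delta\eta\,dx$, the cubic piece can be absorbed into a fraction of $\bar\rho I_1$, yielding $E'(t)\leq -2\lambda E(t)$ and hence \eqref{eq:main} by Gr\"onwall. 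A conceptually cleaner alternative, worth keeping in reserve, is to replace $E$ by the relative entropy $\int[\rho\log(\rho/\bar\rho)-\rho+\bar\rho]\,dx$, whose time derivative is exactly $-\nu(\delta)I_1(\eta)$ and contains no cubic contribution—at the cost of a data-dependent prefactor $\sqrt{\rho_{\max}/\rho_{\min}}$ that must be reconciled with the clean form of \eqref{eq:main}.
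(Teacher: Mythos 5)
Your energy framework and your nonlocal Poincar\'e inequality match the paper's exactly: the same $E(t)$, the same identity $E'(t)=-\nu(\delta)\int_0^1\rho\,\partial_x\rho\,\mathcal{D}_x^\delta\rho\,dx$, and the same Fourier-side proof of the spectral gap $\inf_{k\geq1}2\pi k b_\delta(k)>0$ via integration by parts and Riemann--Lebesgue. The gap is in your treatment of the cubic term $I_2=\int\eta\,\partial_x\eta\,\mathcal{D}_x^\delta\eta\,dx$. You propose to absorb it into a fraction of $\bar\rho\,I_1$ using $\|\eta\|_{L^\infty}\leq\rho_{\max}-\rho_{\min}$. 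But $I_1$ is \emph{equivalent} to $\|\eta\|_{L^2}^2$ (the symbol $2\pi k b_\delta(k)$ is bounded above as well as below for a $\mathbf{C}^1$ kernel), so the best you can extract from your symmetrization $I_2=-\tfrac12\int\eta^2\,\partial_x\mathcal{D}_x^\delta\eta\,dx$ is a bound of the form $|I_2|\leq C(w_\delta)\,\|\eta\|_{L^\infty}\|\eta\|_{L^2}^2$ (the genuinely unbounded piece $-\partial_x\eta/\nu(\delta)$ of $\partial_x\mathcal{D}_x^\delta\eta$ kills itself against $\eta^2$ by periodicity, but the remaining shift operators do not come with a small constant). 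Absorption into $\bar\rho\,I_1\geq\bar\rho\,\alpha\|\eta\|_{L^2}^2$ then requires $C(w_\delta)(\rho_{\max}-\rho_{\min})<\bar\rho\,\alpha$, a smallness condition on the oscillation of the data that is not implied by (A1)--(A5). Your route therefore proves only local (small-perturbation) exponential stability, not the global statement of the theorem.

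The missing idea is that the cubic term has a \emph{sign}, not just a size. The paper writes $\int_0^1(\rho-\rho_{\min})\,\partial_x\rho\,\mathcal{D}_x^\delta\rho\,dx=\int_0^1\partial_x f(\rho)\,\mathcal{D}_x^\delta\rho\,dx$ with $f(\rho)=\tfrac12(\rho-\rho_{\min})^2$ increasing on the range of $\rho$, integrates by parts, and applies a periodic Hardy--Littlewood rearrangement inequality, $\int f(\rho(x))\rho(x+s)\,dx\leq\int f(\rho(x))\rho(x)\,dx$, together with $w_\delta'\leq0$ to conclude this quantity is nonnegative. That yields $\int\rho\,\partial_x\rho\,\mathcal{D}_x^\delta\rho\,dx\geq\rho_{\min}\int\partial_x\rho\,\mathcal{D}_x^\delta\rho\,dx$ with no smallness whatsoever, and then the Poincar\'e inequality and Gr\"onwall finish as you describe (with $\lambda=\nu(\delta)\alpha\rho_{\min}$). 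Your reserve plan via the relative entropy does close globally --- its derivative is exactly $-\nu(\delta)I_1$ --- but it only delivers the weaker estimate with the prefactor $(\rho_{\max}/\rho_{\min})^{1/2}$, not the clean inequality \eqref{eq:main}; the paper makes precisely this comparison in a remark. To repair your main route you need the rearrangement (or an equivalent monotonicity) argument in place of the absorption step.
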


\begin{rem}
	Theorem~\ref{thm:main} says that any classical solution of the nonlocal LWR model \eqref{eq:nonlocal_lwr} {converges exponentially to the uniform flow defined by \eqref{eq:uniform_sol}. In other words, the uniform flow is a globally asymptotically stable equilibrium attracting all initial data}. In such a traffic system, traffic waves will dissipate and all vehicles will quickly adjust their moving positions and driving speeds towards the uniform state from any initial traffic conditions. This conclusion is drawn under the  assumptions \textup{({\bf A1})\,-\,({\bf A5})}, which in particular imposes limitations on the nonlocal interactions and other traffic conditions. More discussions in this regard are given in  Section~\ref{sec:discussion}, along with additional estimates on the exponent $\lambda$ in \eqref{eq:main}  to further illustrate  their significance in real traffic scenarios and the design principle for connected vehicles.
\end{rem}

\begin{rem}\label{rem:extension}
Let us  briefly note some possible extensions of Theorem~\ref{thm:main}. First of all, although the regularity assumption ({\bf A5}) is necessary for the energy estimate, it can be removed by considering viscous approximation of the nonlocal LWR model \eqref{eq:nonlocal_lwr} and passing to a vanishing viscosity limit. Secondly, the exponential stability result is also true for the generalized nonlocal LWR model \eqref{eq:generalize_1} with a wide family of functions $g=g(\rho)$. We leave more detailed discussions on these extensions and their interpretations to Section~\ref{sec:discussion}.
\end{rem}

The remainder of this paper is organized as follows: Section~\ref{sec:stab_analysis} is devoted to stability analysis of \eqref{eq:nonlocal_lwr} and the proofs of Theorem~\ref{thm:linear} and Theorem~\ref{thm:main}. Section~\ref{sec:numerical_exp} provides numerical experiments to illustrate the results. Conclusions and future research directions follow in Section~\ref{sec:conclusion}.

\section{Stability analysis}\label{sec:stab_analysis}
This section aims to establish the main stability results stated earlier in Section~\ref{sec:main_results}.
In Section~\ref{sec:spectral_estimate}, we analyze the spectral properties of the nonlocal gradient operator $\mathcal{D}_x^\delta$ and its corresponding nonlocal diffusion operator $\partial_x\mathcal{D}_x^\delta$. The analysis yields the linear stability result and also helps to show the nonlinear stability.
The proof of Theorem~\ref{thm:main} builds on an energy estimate that utilizes two ingredients: a nonlocal Poincare inequality and a Hardy-Littlewood rearrangement inequality. Section~\ref{sec:energy_estimate} derives the energy estimate and Section~\ref{sec:proof} completes the proof of Theorem~\ref{thm:main} based on the two inequalities. {Section~\ref{sec:discussion} discusses further extensions of the theorem and compares the local and nonlocal models}. In Section~\ref{sec:counter_example}, a counterexample is shown that convergence to the uniform flow does not hold when the assumption ({\bf A3}) is not satisfied.

\subsection{Spectral analysis and linear stability}\label{sec:spectral_estimate}

Given the assumption ({\bf A1}), we consider the Fourier series expansion of any real-valued periodic function $\rho(x)$ for $x\in[0,1]$:
\begin{align*}
	\rho(x)=\sum_{k\in\mathbb{Z}} \hat{\rho}(k)e^{2\pi ikx}.
\end{align*}
For the local gradient operator $\partial_x$ and the nonlocal gradient operator $\mathcal{D}_x^\delta$ defined by \eqref{eq:nonlocal_grad}, a straightforward calculation gives:
\begin{align*}
	\partial_x\rho(x)=\sum_{k\in\mathbb{Z}}2\pi ik\hat{\rho}(k)e^{2\pi ikx},\quad\text{and}\quad 
	\mathcal{D}_x^\delta\rho(x)=\sum_{k\in\mathbb{Z}}[ib_\delta(k)+c_\delta(k)]\hat{\rho}(k)e^{2\pi ikx},
\end{align*}
where
\begin{align}
	b_\delta(k)=\frac1{\nu(\delta)}\int_0^\delta \sin(2\pi ks)w_\delta(s)\,ds,\quad \text{and}\quad c_\delta(k)=\frac1{\nu(\delta)}\int_0^\delta [\cos(2\pi ks)-1]w_\delta(s)\,ds.\label{eq:fourier_coeff}
\end{align}
As a corollary, the spectrum of the nonlocal diffusion operator $\partial_x\mathcal{D}_x^\delta$ is given by the discrete set of eigenvalues
$\{-2\pi k b_\delta(k) +  2\pi i k c_\delta(k) \}_{k\in\mathbb{Z}}$. The following lemma gives an estimate on the real parts of the eigenvalues $\{-2\pi k b_\delta(k)\}_{k\in\mathbb{Z}}=\{0\}\cup\{-2\pi k b_\delta(k)\}_{k\geq1}$.

\begin{lemm}\label{lem:spectra}
	Under the assumption \textup{({\bf A3})}, we have:
	\begin{align}
		\alpha\triangleq\inf_{k\geq1}2\pi kb_\delta(k)>0.\label{eq:estimate_alpha}
	\end{align}
\end{lemm}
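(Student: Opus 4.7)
The plan is to integrate by parts once so as to rewrite $2\pi k b_\delta(k)$ as a manifestly non-negative quantity, and then combine pointwise strict positivity at each $k\geq1$ with a Riemann-Lebesgue asymptotic as $k\to\infty$ to conclude that the infimum is positive.

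Let $J_k=\int_0^\delta\sin(2\pi ks)w_\delta(s)\,ds$, so that $2\pi kb_\delta(k)=2\pi kJ_k/\nu(\delta)$. Using the $\mathbf{C}^1$ regularity of $w_\delta$ from (\textbf{A3}), one integration by parts gives
\[
2\pi kJ_k=w_\delta(0)-\cos(2\pi k\delta)\,w_\delta(\delta)+\int_0^\delta\cos(2\pi ks)\,w_\delta'(s)\,ds.
\]
Substituting $w_\delta(0)=w_\delta(\delta)+\int_0^\delta[-w_\delta'(s)]\,ds$ and using $1-\cos\theta=2\sin^2(\theta/2)$, this collapses into
\[
2\pi kJ_k=2w_\delta(\delta)\sin^2(\pi k\delta)+2\int_0^\delta\sin^2(\pi ks)\,[-w_\delta'(s)]\,ds,
\]
in which both summands are non-negative: $w_\delta(\delta)\geq0$ by (\textbf{A3}) and $-w_\delta'(s)\geq0$ since $w_\delta$ is non-increasing.

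Pointwise strict positivity at each $k\geq1$ would follow because $w_\delta$ being $\mathbf{C}^1$, non-increasing, and non-constant forces $-w_\delta'$ to be a continuous non-negative function that is strictly positive on some open subinterval of $[0,\delta]$; combined with the fact that $\sin^2(\pi ks)>0$ almost everywhere in $s$, the second summand is strictly positive, so $2\pi kb_\delta(k)>0$ for every integer $k\geq1$.

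Finally, to upgrade pointwise positivity to a uniform positive lower bound over the infinite range $k\geq1$, I would invoke the Riemann-Lebesgue lemma applied to the integrable function $w_\delta'$, which yields $\int_0^\delta\cos(2\pi ks)w_\delta'(s)\,ds\to0$ as $k\to\infty$ and therefore
\[
\liminf_{k\to\infty}2\pi kJ_k\;\geq\;\int_0^\delta[-w_\delta'(s)]\,ds\;=\;w_\delta(0)-w_\delta(\delta).
\]
This liminf is strictly positive because $w_\delta$ being non-increasing and non-constant forces $w_\delta(0)>w_\delta(\delta)$ (otherwise $w_\delta$ would be squeezed to a constant). Hence $2\pi kb_\delta(k)$ is bounded below by a fixed positive constant for all $k$ larger than some $K_0$, and the minimum of $2\pi kb_\delta(k)$ over the finite set $\{1,\dots,K_0\}$, which is positive by the previous step, furnishes the bound for small $k$; the smaller of the two bounds is the desired $\alpha$. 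The one genuinely non-obvious step is spotting the algebraic rearrangement that exhibits $2\pi kJ_k$ as a sum of two non-negative pieces; once that identity is in hand, both the sign at each $k$ and the large-$k$ asymptotic are immediate.
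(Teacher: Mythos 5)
Your proposal is correct and follows essentially the same route as the paper: one integration by parts, strict positivity of $2\pi k b_\delta(k)$ at each fixed $k\geq1$ from the sign of $w_\delta'$, a Riemann--Lebesgue argument giving $\liminf_{k\to\infty}2\pi kb_\delta(k)\geq \frac{1}{\nu(\delta)}[w_\delta(0)-w_\delta(\delta)]>0$, and a combination of the two. Your explicit rewriting of $2\pi kJ_k$ as $2w_\delta(\delta)\sin^2(\pi k\delta)+2\int_0^\delta\sin^2(\pi ks)[-w_\delta'(s)]\,ds$ is just a cleaner presentation of the same inequality chain used in the paper.
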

\begin{proof}
	By \cite[Lemma 2]{du2018stability}, the assumption ({\bf A3}) yields that $b_\delta(k)$ is strictly positive for any $k\geq 1$. In fact, since $w_\delta(\cdot)$ is non-increasing and non-constant,
	 integration by parts gives:
	\begin{align}
		2\pi kb_\delta(k)&=\frac1{\nu(\delta)}\left[w_\delta(0)-w_\delta(\delta)\cos(2\pi k\delta)+\int_0^\delta \cos(2\pi ks)w'_\delta(s)\,ds\right],\notag\\
		&\geq\frac1{\nu(\delta)}\left[w_\delta(0)-w_\delta(\delta)+\int_0^\delta \cos(2\pi ks)w'_\delta(s)\,ds\right]>0,\label{eq:estimate_spectra}
	\end{align}
	for any $k\geq 1$. Meanwhile, when $k\to\infty$, one can apply the Riemann-Lebesgue Lemma to get:
	\begin{align*}
		\liminf_{k\to\infty}2\pi kb_\delta(k)\geq \frac1{\nu(\delta)}\left[w_\delta(0)-w_\delta(\delta)\right]>0.
	\end{align*}
	Combining these facts, we get \eqref{eq:estimate_alpha}.
\end{proof}

We now present the proof of the linear stability given in Theorem~\ref{thm:linear}.

\begin{proof}[Proof of Theorem~\ref{thm:linear}]
To show the linear stability, we simply need to consider the linearized equation of
\eqref{eq:nonlocal_lwr} around the uniform flow $\bar{\rho}$. The equation is given by:
\begin{align}
	\partial_t\tilde{\rho}(x,t)+(1-2\bar{\rho})\partial_x \tilde{\rho}(x,t)
	=\nu(\delta) \bar{\rho} \partial_x \mathcal{D}_x^\delta\tilde{\rho}(x,t).
	\label{eq:nonlocal_diffusion_linearize}
\end{align}
The perturbative solution $\tilde{\rho}(x,t)$ is assumed to have mean zero initially, which remains true for all time.  Hence, for the linear stability, we are concerned with the eigenvalues of the nonlocal diffusion operator $\partial_x\mathcal{D}_x^\delta$ except the single zero eigenvalue with a constant eigenfunction. The real parts of those eigenvalues, as shown in Lemma~\ref{lem:spectra}, are uniformly negative. We thus have the linear stability stated in Theorem~\ref{thm:linear}. 
\end{proof}

\subsection{Energy estimate}\label{sec:energy_estimate}

Suppose $\rho(x,t)$ is any $\mathbf{C}^1$ solution to the nonlocal LWR model \eqref{eq:nonlocal_lwr}. The conservation property gives:
\begin{align*}
	\int_0^1 \rho(x,t)\,dx=\int_0^1\rho_0(x)\,dx=\bar{\rho},\quad\forall t\geq0.
\end{align*}

We define the following \emph{energy function} (aka a Lyapunov functional):
\begin{align}
	E(t)\triangleq\frac12\int_0^1 \left(\rho(x,t)-\bar{\rho}\right)^2\,dx,\quad\forall t\geq0.\label{eq:energy_fun}
\end{align}
It is straightforward to get:
\begin{align}
	\frac{dE(t)}{dt}&=\int_0^1\rho(x,t)\partial_t\rho(x,t)\,dx.\label{eq:tmp_1_1}
\end{align}
Note that \eqref{eq:nonlocal_diffusion} is equivalent to \eqref{eq:nonlocal_lwr}, substituting \eqref{eq:nonlocal_diffusion} into \eqref{eq:tmp_1_1} yields:
\begin{align*}
	\frac{dE(t)}{dt}&=-\int_0^1\rho(x,t)\partial_x\left(\rho(x,t)(1-\rho(x,t))\right)\,dx+\nu(\delta)\int_0^1\rho(x,t)\partial_x\left(\rho(x,t)\mathcal{D}_x^\delta\rho(x,t)\right)\,dx.
\end{align*}
Apply the Newton-Leibniz rule and integration by parts, we obtain:
\begin{align}
	\frac{dE(t)}{dt}=-\nu(\delta)\int_0^1\rho(x,t)\partial_x\rho(x,t)\mathcal{D}_x^\delta\rho(x,t)\,dx.\label{eq:energy_dt}
\end{align}
All boundary terms vanish because of the periodic boundary condition.

\subsection{Proof of Theorem~\ref{thm:main}}\label{sec:proof}
We present two lemmas to estimate the right hand side of \eqref{eq:energy_dt}. Then the conclusion of the theorem follows from the estimate of the energy function $E(t)$.

\begin{lemm}{[Nonlocal Poincare inequality]}\label{lem: nonlocal_poincare}
	Suppose that the nonlocal kernel $w_\delta(\cdot)$ satisfies the assumption \textup{({\bf A3})}. There exists a constant $\alpha>0$ such that for any $\mathbf{C}^1$ periodic function $\rho(x)$ defined on $[0,1]$:
	\begin{align}
	 	\int_0^1 \partial_x\rho(x) \mathcal{D}_x^\delta \rho(x)\,dx\geq \alpha\int_0^1 \left(\rho(x)-\bar{\rho}\right)^2\,dx,\label{eq:nonlocal_poincare}
	\end{align}
	where $\bar{\rho}=\int_0^1\rho(x)\,dx$, $\alpha$ only depends on the nonlocal range $\delta$ and the nonlocal kernel $w_\delta(\cdot)$.
\end{lemm}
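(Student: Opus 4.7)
The plan is to prove the nonlocal Poincaré inequality via Fourier analysis on the periodic interval $[0,1]$, leveraging directly the spectral estimate already established in Lemma~\ref{lem:spectra}. Since $\rho$ is $\mathbf{C}^1$ and periodic, expand $\rho(x)=\sum_{k\in\mathbb{Z}}\hat\rho(k)e^{2\pi ikx}$, with the mean recovered as $\bar\rho=\hat\rho(0)$. The paper has already computed the Fourier symbols of the two operators of interest, namely $2\pi ik$ for $\partial_x$ and $ib_\delta(k)+c_\delta(k)$ for $\mathcal{D}_x^\delta$, where $b_\delta$ and $c_\delta$ are given by \eqref{eq:fourier_coeff}. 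So everything reduces to a diagonal computation in the Fourier basis.

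First, I would apply Parseval's identity to $\int_0^1 \partial_x\rho\cdot\mathcal{D}_x^\delta\rho\,dx$. Using $\overline{\hat\rho(-k)}=\hat\rho(k)$ for real $\rho$, together with the parities $b_\delta(-k)=-b_\delta(k)$ (sine is odd) and $c_\delta(-k)=c_\delta(k)$ (cosine is even), I would compute
\begin{align*}
\int_0^1 \partial_x\rho(x)\,\mathcal{D}_x^\delta\rho(x)\,dx
&=\sum_{k\in\mathbb{Z}}(2\pi ik)\overline{(ib_\delta(k)+c_\delta(k))}\,|\hat\rho(k)|^2\\
&=\sum_{k\in\mathbb{Z}}\bigl[2\pi k\,b_\delta(k)+2\pi ik\,c_\delta(k)\bigr]|\hat\rho(k)|^2.
\end{align*}
The imaginary $c_\delta$ contributions from $k$ and $-k$ cancel (since $k\mapsto kc_\delta(k)$ is odd while $|\hat\rho(k)|^2$ is even), so the sum collapses to the real quantity $\sum_{k\in\mathbb{Z}}2\pi k\,b_\delta(k)\,|\hat\rho(k)|^2=2\sum_{k\geq1}2\pi k\,b_\delta(k)\,|\hat\rho(k)|^2$, where the $k=0$ term vanishes.

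Now Lemma~\ref{lem:spectra} gives $2\pi k\,b_\delta(k)\geq\alpha>0$ for every $k\geq1$, with $\alpha$ depending only on $\delta$ and $w_\delta(\cdot)$. Inserting this bound and then undoing Parseval on the other side, noting that $\int_0^1(\rho-\bar\rho)^2\,dx=\sum_{k\neq0}|\hat\rho(k)|^2=2\sum_{k\geq1}|\hat\rho(k)|^2$ since $\bar\rho=\hat\rho(0)$, yields \eqref{eq:nonlocal_poincare} with the same constant $\alpha$.

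I do not expect a serious obstacle: the main care is bookkeeping of parities and conjugates so that the integral is correctly identified as a real nonnegative quadratic form diagonal in the Fourier basis, and that the constant $\alpha$ produced here is literally the one from Lemma~\ref{lem:spectra} (and therefore depends only on $\delta$ and $w_\delta$, as the statement requires). If one wanted to avoid Fourier series entirely, an alternative route would be a direct manipulation of the double integral defining $\mathcal{D}_x^\delta\rho$ combined with integration by parts, but the Fourier approach is shorter and dovetails exactly with the spectral analysis already in Section~\ref{sec:spectral_estimate}.
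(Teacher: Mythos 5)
Your proof is correct and follows essentially the same route as the paper: Parseval's identity applied to the pairing of $\partial_x\rho$ and $\mathcal{D}_x^\delta\rho$, cancellation of the $c_\delta(k)$ contributions by parity, and then the uniform lower bound $2\pi k\,b_\delta(k)\geq\alpha$ from Lemma~\ref{lem:spectra}. No gaps; the constant you obtain is the same $\alpha$ as in \eqref{eq:estimate_alpha}, exactly as in the paper.
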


\begin{proof}
We have $b_\delta(-k)=-b_\delta(k)$, $c_\delta(-k)=c_\delta(k)$ and $\hat{\rho}(-k)=\overline{\hat{\rho}(k)}$ for all $k\in\mathbb{Z}$.
By Parseval's identity,
\begin{align*}
 	\int_0^1 \partial_x\rho(x) \mathcal{D}_x^\delta \rho(x)\,dx&=\sum_{k\in\mathbb{Z}}-2\pi ik[ib_\delta(k)+c_\delta(k)]|\hat{\rho}(k)|^2=\sum_{k=1}^\infty 4\pi kb_\delta(k)|\hat{\rho}(k)|^2.
\end{align*}
Meanwhile,
\begin{align*}
	\int_0^1 \left(\rho(x)-\bar{\rho}\right)^2\,dx=\sum_{k\neq0}|\hat{\rho}(k)|^2=\sum_{k=1}^\infty2|\hat{\rho}(k)|^2.
\end{align*}
Then the inequality \eqref{eq:nonlocal_poincare} follows from \eqref{eq:estimate_alpha}.
\end{proof}

\begin{rem}
	The nonlocal Poincare inequality \eqref{eq:nonlocal_poincare} generalizes the classical one:
	\begin{align}
		\int_0^1 \left(\partial_x\rho(x)\right)^2\,dx\geq \alpha\int_0^1 \left(\rho(x)-\bar{\rho}\right)^2\,dx,\label{eq:classical_poincare}
	\end{align}
	by introducing the nonlocal gradient operator $\mathcal{D}_x^\delta$. \cite{du2018stability} proposed another generalization of \eqref{eq:classical_poincare}:
	\begin{align}
		\int_0^1 \left(\mathcal{D}_x^\delta \rho(x)\right)^2\,dx\geq \alpha\int_0^1 \left(\rho(x)-\bar{\rho}\right)^2\,dx,\label{eq:corr_poincare}
	\end{align}
	to analyze nonlocal Dirichlet integrals. There, $\mathcal{D}_x^\delta$ uses a symmetric difference quotient, so the eigenvalues of $\mathcal{D}_x^\delta$ only have imaginary parts $b_\delta(k)$. In that case, the singularity of the kernel $w_\delta(\cdot)$ at the origin is necessary to bound $b_\delta(k)$ from below when $k\to\infty$, which then implies \eqref{eq:corr_poincare}. 
	This type of nonlocal Poincare inequality  \eqref{eq:corr_poincare} 
	is further extended in \cite{lee2020nonlocal} where a non-symmetric kernel is used to define the nonlocal gradient, much like the one studied in this work. Then the eigenvalues of $\mathcal{D}_x^\delta$ have both real and imaginary parts. With the kernel $w_\delta(\cdot)$ having no singularity at the origin, the imaginary parts $b_\delta(k)$ decay to zero as $k\to\infty$. However, the real parts $c_\delta(k)$ are bounded from below when $k\to\infty$, thus also leading to \eqref{eq:corr_poincare}.
	
	The inequality \eqref{eq:nonlocal_poincare}, as far as the authors know, has not been presented before. Here, we are estimating the $\mathbf{L}^2$ inner product of $\partial_x\rho$ and $\mathcal{D}_x^\delta\rho$. Although the eigenvalues of $\mathcal{D}_x^\delta$ have both real and imaginary parts because of the non-symmetric kernel, the real parts $c_\delta(k)$ have no contribution to the $\mathbf{L}^2$ inner product. We assume the kernel $w_\delta(\cdot)$ to have no singularity, thus $b_\delta(k)\to0$ when $k\to\infty$. But it does not create any issue since \eqref{eq:nonlocal_poincare} only requires that $2\pi kb_\delta(k)$ is bounded from below. The factor $2\pi k$, which corresponds to the eigenvalues of the local gradient operator $\partial_x$, helps us get the desired result.
\end{rem}

\begin{rem}\label{rem:poincareconstant}
	Let us mention that in some special cases, the nonlocal Poincare inequality \eqref{eq:nonlocal_poincare} can become an equality. For example, when $\delta=1$ and $w_\delta(s)=2(1-s)$, we have:
	\begin{align*}
		\mathcal{D}_x^\delta\rho(x)=6\int_0^1 (1-s)[\rho(x+s)-\rho(x)]\,ds.
	\end{align*}
	A direct calculation gives that $\partial_x\mathcal{D}_x^\delta\rho(x)=-6(\rho(x)-\bar{\rho})-3\partial_x\rho(x)$. That is, the nonlocal diffusion is actually a local term. As a corollary, 
	\begin{align*}
		\int_0^1\partial_x\rho(x)\mathcal{D}_x^\delta\rho(x)\,dx=-\int_0^1(\rho(x)-\bar{\rho})\partial_x\mathcal{D}_x^\delta\rho(x)\,dx=6\int_0^1(\rho(x)-\bar{\rho})^2\,dx,
	\end{align*}
	which is a key ingredient used in \cite{karafyllis2020analysis} to study the nonlocal LWR model with nudging. For more general choices of the nonlocal range $\delta$ and nonlocal kernel $w_\delta(\cdot)$, Lemma~\ref{lem: nonlocal_poincare} provides a more effective way to derive global asymptotic stability as demonstrated in this work.
\end{rem}

A special case of Lemma~\ref{lem: nonlocal_poincare} is when $w_\delta(\cdot)$ is a rescaled kernel: $w_\delta(s)=w_1(s/\delta)/\delta$. That is, the family of kernels $\{w_\delta(\cdot)\}_{\delta\in(0,1]}$ is generated from the kernel $w_1(\cdot)$ defined on $[0,1]$. In this case, it is worthwhile to mention that the constant $\alpha$ in \eqref{eq:nonlocal_poincare} is independent of the nonlocal range $\delta$.

\begin{prop}\label{prop:rescale}
	Suppose $w_\delta(s)=w_1(s/\delta)/\delta$ for all $s\in[0,\delta]$, $\delta\in(0,1]$, where $w_1(\cdot)$ satisfies the
 assumption ({\bf A3}) for $\delta=1$.
	Then there exists a constant $\alpha>0$ only depending on $w_1(\cdot)$ such that for any $\delta\in(0,1]$, \eqref{eq:nonlocal_poincare} holds for the nonlocal kernel $w_\delta(\cdot)$ with the constant $\alpha$.
\end{prop}

\begin{proof}
	It suffices to show $\alpha\triangleq\inf_{k\geq1,0<\delta\leq1}2\pi kb_\delta(k)>0$ where $b_\delta(k)$ is defined in \eqref{eq:fourier_coeff}.

	We denote $a=2\pi k\delta$ and $\nu_1=\int_0^1sw_1(s)\,ds$. Obviously, we have $\nu(\delta)=\delta\nu_1$.
	Then we can rewrite \eqref{eq:estimate_spectra} as:
	\begin{align*}
		2\pi kb_\delta(k)
		&\geq\frac1{\nu_1\delta^2}\left[w_1(0)-w_1(1)+\int_0^1 \cos(as)w'_1(s)\,ds\right],\\
		&=\frac1{\nu_1\delta^2}\int_0^1 [\cos(as)-1]w'_1(s)\,ds.
	\end{align*}
	Note that $w_1(1)<w_1(0)$, there exist constants $0<s_1<s_2<1$ and $\eta>0$, which only depend on $w_1(\cdot)$, such that $w'_1(s)\leq-\eta$ when $s\in[s_1,s_2]$. Hence we have:
	\begin{align*}
		2\pi kb_\delta(k)&\geq\frac\eta{\nu_1\delta^2}\int_{s_1}^{s_2} [1-\cos(as)]\,ds.
	\end{align*}

	When $0<a<1$, we use the inequality $1-\cos(as)\geq\frac{(as)^2}{2}-\frac{(as)^4}{24}\geq\frac{11}{24}(as)^2$ to get:
	\begin{align}
		2\pi kb_\delta(k)\geq\frac{\eta a^2}{\nu_1\delta^2}\cdot\frac{11}{72}(s_2^3-s_1^3)=\frac{11\pi^2k^2\eta}{18\nu_1}(s_2^3-s_1^3)=\alpha_1k^2\geq \alpha_1>0,\label{eq:tmp_2_1}
	\end{align}
	for any $k\geq1$, where the constant $\alpha_1$ only depends on $w_1(\cdot)$.

	When $a\geq1$, consider the following integral as a function of $a$:
	\begin{align*}
		h(a)\triangleq\int_{s_1}^{s_2} [1-\cos(as)]\,ds,\quad a\in[1,+\infty).
	\end{align*}
	Then $h(a)$ is always positive and $h(a)\to s_2-s_1>0$ when $a\to+\infty$.
	Hence $h(a)$ has a lower bound $\alpha_2>0$ for $a\in[1,+\infty)$, and $\alpha_2$ only depends on $w_1(\cdot)$ .
	In this case,
	\begin{align}
		2\pi kb_\delta(k)\geq\frac{\eta\alpha_2}{\nu_1\delta^2}\geq\frac{\eta\alpha_2}{\nu_1}>0.\label{eq:tmp_2_2}
	\end{align}
	The estimates \eqref{eq:tmp_2_1}\eqref{eq:tmp_2_2} give the conclusion.
\end{proof}

Next, we present an inequality similar to \eqref{eq:nonlocal_poincare} to deal with the presence of nonlinearity. Actually, Lemma~\ref{lem:nonlinear_poincare} stated below bridges between linear and nonlinear diffusion in the nonlocal setting. Its proof uses the following Hardy-Littlewood rearrangement inequality on a periodic domain. A similar inequality is used in \cite{bressan2019traffic} to prove that the local limit of nonlocal solutions of \eqref{eq:nonlocal_lwr} satisfies the entropy condition.

\begin{lemm}{[Hardy-Littlewood rearrangement inequality]}\label{lem:hardy-littlewood}
	{Suppose $\rho(x)$ is a continuous periodic function defined on $[0,1]$. For any continuous, monotonically increasing function $f(\cdot)$ and $s\in[0,1]$}:
	\begin{align}
		\int_0^1 f(\rho(x))\rho(x+s)\,dx\leq \int_0^1 f(\rho(x))\rho(x)\,dx.\label{eq:hardy}
	\end{align}
\end{lemm}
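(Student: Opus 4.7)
The plan is to reduce the statement to the classical Hardy-Littlewood rearrangement inequality on $[0,1]$, exploiting the comonotone structure of $f(\rho)$ and $\rho$ (both are monotone functions of the single variable $\rho(x)$). I would let $\rho^*$ denote the non-increasing rearrangement of $\rho$ on $[0,1]$, and invoke two straightforward observations. First, the translate $\rho(\cdot+s)$, interpreted on the circle with periodic identification, has the same Lebesgue distribution as $\rho$, and hence shares its rearrangement $\rho^*$. Second, because $f$ is continuous and monotonically increasing, $f\circ\rho^*$ is itself non-increasing and equimeasurable with $f\circ\rho$, so $(f\circ\rho)^* = f\circ\rho^*$.

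Granted those two identifications, the proof collapses to a two-line sequence. Hardy-Littlewood applied to $u=f(\rho(\cdot))$ and $v=\rho(\cdot+s)$ would give
\begin{align*}
	\int_0^1 f(\rho(x))\,\rho(x+s)\,dx \;\le\; \int_0^1 f(\rho^*(x))\,\rho^*(x)\,dx,
\end{align*}
while the observation that $f(\rho(x))\rho(x) = \Phi(\rho(x))$ with $\Phi(y) = y f(y)$ depends only pointwise on $\rho$ would yield, by equimeasurability,
\begin{align*}
	\int_0^1 f(\rho(x))\,\rho(x)\,dx \;=\; \int_0^1 f(\rho^*(x))\,\rho^*(x)\,dx.
\end{align*}
Combining the two displays delivers \eqref{eq:hardy}. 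If $f(\rho)$ can change sign, I would first shift $f$ by an additive constant; this shifts both sides of \eqref{eq:hardy} by the same quantity and is harmless.

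The main obstacle is essentially cosmetic: invoking Hardy-Littlewood cleanly and keeping the equimeasurability bookkeeping straight. To keep the presentation self-contained, I would alternatively sketch a layer-cake argument. Writing $\rho(x) = \int_0^\infty \mathbf{1}_{\{\rho>u\}}(x)\,du$ (after a harmless shift to ensure $\rho\ge 0$) and expanding $f(\rho)$ similarly against $df$, the lemma reduces via Fubini to the set inequality
\begin{align*}
	|\{\rho>u\}\cap(\{\rho>v\}-s)| \;\le\; |\{\rho>u\}\cap\{\rho>v\}|
\end{align*}
for all $u,v\ge 0$. But super-level sets of the same function are nested, so the right-hand side equals $\min(|\{\rho>u\}|,|\{\rho>v\}|)$, while the left-hand side is bounded above by the same minimum; this one-line fact is the structural reason comonotone functions attain maximal correlation under translation, and it bypasses any appeal to Hardy-Littlewood as a black box.
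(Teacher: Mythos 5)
Your proposal is correct, and both of your routes differ genuinely from the paper's. The paper discretizes: it samples $\rho$ and $f\circ\rho$ on a uniform grid with $s=m/N$, observes that the two sequences are sorted by the same permutation because $f$ is increasing, applies the finite rearrangement inequality to the cyclic shift $\tau_m$, and then passes to the limit twice (Riemann sums for rational $s$, then density of $\mathbb{Q}\cap[0,1]$ for general $s$). Your first route replaces all of that by the continuous Hardy--Littlewood inequality on $[0,1]$ together with the two equimeasurability identities $(\rho(\cdot+s))^*=\rho^*$ and $(f\circ\rho)^*=f\circ\rho^*$; this is shorter but imports the continuous inequality and the composition identity as (standard) black boxes, and you correctly flag the additive shifts needed to make both factors nonnegative --- shifting $f$ by $c$ changes both sides of \eqref{eq:hardy} by $c\bar\rho$, and shifting the translated factor changes both sides by $c\int_0^1 f(\rho(x))\,dx$, so both are harmless. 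Your layer-cake route is the cleanest of the three: writing each factor as an integral of indicators of superlevel sets against $du$ and the nonnegative Lebesgue--Stieltjes measure $df$, Fubini reduces \eqref{eq:hardy} to the set inequality $|\{\rho>u\}\cap(\{\rho>v\}-s)|\le|\{\rho>u\}\cap\{\rho>v\}|$, which holds because the untranslated intersection of two nested sets already attains the trivial upper bound $\min(|\{\rho>u\}|,|\{\rho>v\}|)$ while translation on the circle preserves measure. That argument is fully self-contained, avoids the paper's (somewhat glossed-over) double limit process, and in fact requires only that $\rho$ be bounded and measurable rather than continuous.
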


\begin{proof}
	We first assume $s\in\mathbb{Q}\cap[0,1]$. Suppose $N$ is a positive integer such that $m=sN$ is a nonnegative integer. Let us consider
	the discrete case with
	\begin{align*}
		0=x_0<x_1<\cdots<x_N=1,\quad x_i=i\Delta x,\quad i=0,\dots,N,
	\end{align*}
	where $\Delta x=1/N$. Denote:
	\begin{align*}
		\rho_i=\rho(x_i),\quad f_i=f(\rho(x_i)),\quad i=0,\dots,N,  \; \text{ with }\; 
		\rho_0=\rho_N, \;  f_0=f_N.
	\end{align*}
	Suppose $\sigma(1),\sigma(2),\dots,\sigma(N)$ is a permutation of $1,2,\dots,N$ such that:
	\begin{align*}
		\rho_{\sigma(1)}\leq\rho_{\sigma(2)}\leq\cdots\leq\rho_{\sigma(N)}. 	
	\end{align*}
	The monotonicity of $f(\cdot)$ yields:
	\begin{align*}
		f_{\sigma(1)}\leq f_{\sigma(2)}\leq\cdots\leq f_{\sigma(N)}. 	
	\end{align*}
	Denote $\tau_m$ the shift permutation defined by $\tau_m(i)=i+m$, $i=1,\dots,N$ (use the circular extension when $i+m>N$). The rearrangement inequality gives:
	\begin{align*}
		\sum_{i=1}^N f_i\rho_{i+m}=\sum_{i=1}^N f_{\sigma(i)}\rho_{\tau_m\circ\sigma(i)}\leq \sum_{i=1}^N f_{\sigma(i)}\rho_{\sigma(i)}=\sum_{i=1}^N f_i\rho_i.
	\end{align*}
	The inequality \eqref{eq:hardy} can then be derived via a limit process. By the density of $\mathbb{Q}\cap[0,1]$ in $[0,1]$, a further limit process can establish \eqref{eq:hardy} for any $s\in [0,1]$.
\end{proof}

\begin{lemm}\label{lem:nonlinear_poincare}
	Suppose that the nonlocal kernel $w_\delta(\cdot)$ satisfies the assumption \textup{({\bf A3})}. For any $\mathbf{C}^1$ periodic function $\rho(x)$ defined on $[0,1]$ and satisfying $\rho(x)\geq\rho_{\mathrm{min}}\geq 0$:
	\begin{align}
	 	\int_0^1 \rho(x)\partial_x\rho(x) \mathcal{D}_x^\delta \rho(x)\,dx\geq \rho_{\mathrm{min}}\int_0^1 \partial_x\rho(x) \mathcal{D}_x^\delta \rho(x)\,dx.\label{eq:nonlinear_poincare}
	\end{align}
\end{lemm}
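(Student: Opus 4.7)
The plan is to substitute $u(x) = \rho(x) - \rho_{\mathrm{min}} \geq 0$ and reduce \eqref{eq:nonlinear_poincare} to showing
\begin{align*}
    I := \int_0^1 u(x)\, \partial_x u(x)\, \mathcal{D}_x^\delta u(x)\,dx \geq 0,
\end{align*}
which is equivalent because $\partial_x \rho = \partial_x u$ and $\mathcal{D}_x^\delta$ annihilates constants. Using the definition of $\mathcal{D}_x^\delta$, Fubini, and the identity $\int_0^1 u^2 \partial_x u\,dx = 0$ from periodicity, this in turn becomes
\begin{align*}
    \nu(\delta)\, I = \int_0^\delta w_\delta(s)\, A(s)\,ds, \qquad A(s) := \int_0^1 u(x)\, \partial_x u(x)\, u(x+s)\,dx.
\end{align*}

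Next, I would introduce the auxiliary quantity $H(s) := \int_0^1 \tfrac{u(x)^2}{2}\, u(x+s)\,dx$. Writing $u\,\partial_x u = \partial_x(u^2/2)$ and integrating by parts in $x$ yields $A(s) = -H'(s)$. Integrating by parts in $s$ then gives
\begin{align*}
    \int_0^\delta w_\delta(s)\, A(s)\,ds = w_\delta(0)H(0) - w_\delta(\delta)H(\delta) + \int_0^\delta w_\delta'(s)\, H(s)\,ds.
\end{align*}
At this stage, the Hardy--Littlewood rearrangement inequality in Lemma~\ref{lem:hardy-littlewood}, applied with the monotonically increasing function $f(r) = r^2/2$ on the range $[0,\infty)$ of $u$, yields $H(s) \leq H(0)$ for all $s \in [0,1]$. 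Combining this with $w_\delta'(s) \leq 0$ from assumption \textup{({\bf A3})}, I obtain $w_\delta'(s) H(s) \geq w_\delta'(s) H(0)$ pointwise, whence
\begin{align*}
    \int_0^\delta w_\delta'(s)\, H(s)\,ds \geq H(0)\bigl[w_\delta(\delta) - w_\delta(0)\bigr].
\end{align*}
Substituting this estimate back, all terms involving $w_\delta(0)$ cancel and the expression collapses to $w_\delta(\delta)\bigl[H(0) - H(\delta)\bigr]$, which is nonnegative by one more application of Hardy--Littlewood together with $w_\delta(\delta)\geq 0$.

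The main conceptual obstacle, I expect, is that $A(s)$ is not sign-definite in $s$ (a sinusoidal perturbation around a constant state demonstrates this explicitly), so one cannot simply argue pointwise under the integral. The subtlety is that Hardy--Littlewood only gives the single comparison $H(s) \leq H(0)$ rather than monotonicity of $H$; nevertheless, combining this one-sided inequality with the non-increasing weight $w_\delta$ via one integration by parts in $s$ is exactly enough to recover nonnegativity of the weighted integral. This is precisely where the assumption \textup{({\bf A3})} that $w_\delta$ is non-increasing plays a decisive role, explaining why a monotonicity hypothesis on the kernel (and not merely positivity) is needed.
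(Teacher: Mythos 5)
Your proof is correct and is essentially the paper's own argument in different notation: your substitution $u=\rho-\rho_{\mathrm{min}}$ and the function $u^2/2$ correspond to the paper's $f(\rho)=\tfrac12(\rho-\rho_{\mathrm{min}})^2$, your $H(s)$ equals the paper's $I(s)$ up to an additive constant, and both proofs reduce to the same identity via integration by parts in $x$ and in $s$, then conclude with Lemma~\ref{lem:hardy-littlewood} ($H(s)\le H(0)$) combined with $w_\delta'\le 0$ and $w_\delta(\delta)\ge 0$.
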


\begin{proof}
	Define $f(\rho)=\frac12(\rho-\rho_{\text{min}})^2$. Then \eqref{eq:nonlinear_poincare} can be rewritten as:
	\begin{align*}
		\int_0^1\partial_x f(\rho(x))\mathcal{D}_x^\delta\rho(x)\,dx\geq0.
	\end{align*}
	Using integration by parts, it is equivalent to:
	\begin{align}
		\int_0^1 f(\rho(x))\partial_x\mathcal{D}_x^\delta\rho(x)\,dx\leq0.\label{tmp_3_1}
	\end{align}
	We only need to show \eqref{tmp_3_1}. A direct calculation gives:
	\begin{align}
		\partial_x\mathcal{D}_x^\delta\rho(x)&=\frac1{\nu(\delta)}\left[\int_0^\delta \partial_x\rho(x+s)w_\delta(s)\,ds-\partial_x\rho(x)\right],\notag\\
		&=\frac1{\nu(\delta)}\left[\int_0^\delta \partial_s\rho(x+s)w_\delta(s)\,ds-\partial_x\rho(x)\right],\notag\\
		&=\frac1{\nu(\delta)}\left[\rho(x+\delta)w_\delta(\delta)-\rho(x)w_\delta(0)-\int_0^\delta\rho(x+s)w_\delta'(s)\,ds-\partial_x\rho(x)\right].\label{eq:tmp_3_2}
	\end{align}
	We multiply both sides of \eqref{eq:tmp_3_2} by $f(\rho(x))$ and integrate them over the domain $[0,1]$.
	The Newton-Leibniz rule gives:
	\begin{align*}
		\int_0^1f(\rho(x))\partial_x\rho(x)\,dx=0.
	\end{align*}
	Define:
	\begin{align*}
		I(s)\triangleq\int_0^1f(\rho(x))\rho(x+s)\,dx,\quad s\in[0,\delta].
	\end{align*}
	Then we have:
	\begin{align}
		\int_0^1 f(\rho(x))\partial_x\mathcal{D}_x^\delta\rho(x)\,dx&=\frac1{\nu(\delta)}\left[I(\delta)w_\delta(\delta)-I(0)w_\delta(0)-\int_0^\delta I(s)w_\delta'(s)\,ds\right],\nonumber \\
		&=\frac1{\nu(\delta)}\left[\left(I(\delta)-I(0)\right)w_\delta(\delta)+\int_0^\delta \left(I(0)-I(s)\right)w_\delta'(s)\,ds\right].\label{eq:tmp_3_3}
	\end{align}
	When $\rho\geq\rho_{\text{min}}\geq 0$, $f(\rho)$ is monotonically increasing. Lemma~\ref{lem:hardy-littlewood} yields that $I(s)\leq I(0)$ for any $0\leq s\leq\delta$. In addition, the assumption ({\bf A3}) yields that $w_\delta'(s)\leq 0$ for any $0\leq s\leq\delta$. So the both terms on the right hand side of \eqref{eq:tmp_3_3} is non-positive, which gives \eqref{tmp_3_1}.
\end{proof}

Naturally, we will be most interested in applying the above lemma to the case where the density $\rho$ satisfies the assumption ({\bf A4}) so that $\rho_{\text{min}}>0$.

Now we can prove our main results.
\begin{proof}[Proof of Theorem~\ref{thm:main}]
	Define the energy function $E(t)$ by \eqref{eq:energy_fun}. Then the derivative of $E(t)$ is given by \eqref{eq:energy_dt}. By Theorem~\ref{thm:weak_solution}, $\rho(x,t)\geq\rho_{\text{min}}>0$ for all $x\in[0,1]$ and $t\geq0$. Apply Lemma~\ref{lem: nonlocal_poincare} and Lemma~\ref{lem:nonlinear_poincare}, we get the estimate:
	\begin{align*}
		\frac{dE(t)}{dt}\leq-2\nu(\delta)\alpha\rho_{\text{min}}E(t),\quad \forall t\geq0.
	\end{align*}
	By the Gronwall's lemma, $E(t)\leq e^{-2\lambda t}E(0)$ where $\lambda=\nu(\delta)\alpha\rho_{\text{min}}$. It immediately yields the conclusion.
\end{proof}

\begin{rem}\label{rem:kl_divergence}
	An alternative approach to show the exponential stability is to define the Lyapunov functional:
	\begin{align*}
		V(t)=\int_0^1 \rho(x,t)\ln\frac{\rho(x,t)}{\bar{\rho}}\,dx,
	\end{align*}
	which is the Kullback–Leibler divergence from the uniform density to $\rho(x,t)/\bar{\rho}$.
	A calculation similar to that in Section~\ref{sec:energy_estimate} gives:
	\begin{align*}
		\frac{dV(t)}{dt}=-\nu(\delta)\int_0^1\partial_x\rho(x,t)\mathcal{D}_x^\delta\rho(x,t)\,dx.
	\end{align*}
	Apply Lemma~\ref{lem: nonlocal_poincare}, one can get $dV(t)/dt\leq-\nu(\delta)\alpha\int_0^1(\rho(x,t)-\bar{\rho})^2\,dx$.
	The Gronwall's lemma together with the inequality (see \cite{karafyllis2020analysis}):
	\begin{align*}
		\frac1{2\rho_{\mathrm{max}}}\int_0^1(\rho(x,t)-\bar{\rho})^2\,dx\leq V(t)\leq\frac1{2\rho_{\mathrm{min}}}\int_0^1(\rho(x,t)-\bar{\rho})^2\,dx,
	\end{align*}
	gives the exponential convergence in the Kullback–Leibler divergence:
	\begin{align*}
		V(t)\leq e^{-2\lambda t}V(0),
	\end{align*}
	where $\lambda=\nu(\delta)\alpha\rho_{\mathrm{min}}$ and consequently:
	\begin{align}
		\norm{\rho(\cdot,t)-\bar{\rho}}_{\mathbf{L}^2}\leq\left(\frac{\rho_{\mathrm{max}}}{\rho_{\mathrm{min}}}\right)^{\frac12}e^{-\lambda t}\norm{\rho_0-\bar{\rho}}_{\mathbf{L}^2}.\label{eq:exp_stab_alt}
	\end{align}
	The estimate on the Kullback–Leibler divergence was proposed in \cite{karafyllis2020analysis} to prove exponential stability for the nonlocal LWR model with nudging. For the case discussed in this paper, Theorem~\ref{thm:main} provides a sharper result \eqref{eq:main} than \eqref{eq:exp_stab_alt}.
\end{rem}

\subsection{Further discussions on the main results}\label{sec:discussion}

	The energy estimate presented so far requires the regularity assumption ({\bf A5}) on the solution. A natural question is whether Theorem~\ref{thm:main} holds for the general weak solution. To deal with the weak solution, we consider the following viscous nonlocal LWR model:
	\begin{align}
		\partial_t\rho(x,t)+\partial_x\left(\rho(x,t) U\left(\int_0^\delta\rho(x+s,t)w_\delta(s)\,ds\right)\right)=\epsilon\partial_x^2\rho(x,t),\label{eq:viscous_nonlocal_lwr}
	\end{align}
	where $\epsilon>0$ is the viscosity parameter. \cite{colombo2019singular} studied \eqref{eq:viscous_nonlocal_lwr} on the real line and showed the solution well-posedness using a fixed-point theorem and $\mathbf{L}^\infty$ estimates. Based on similar arguments, one can show that \eqref{eq:viscous_nonlocal_lwr} admits a unique weak solution under the conditions in Theorem~\ref{thm:weak_solution}. Further, one can show that the weak solution is $\mathbf{C}^\infty$ smooth by a bootstrap argument. Then we can carry out the energy estimate on \eqref{eq:viscous_nonlocal_lwr} and obtain $E(t)\leq e^{-2(\lambda+c\epsilon) t}E(0)$ where $c>0$ is a constant. Letting $\epsilon\to0$, it can be shown that the solution of \eqref{eq:viscous_nonlocal_lwr} converges to the solution of the original nonlocal LWR model \eqref{eq:nonlocal_lwr} weakly, using similar estimates in \cite{colombo2019singular}. As a corollary, we obtain $E(t)\leq e^{-2\lambda t}E(0)$ since $E(t)$ is a lower semi-continuous functional of $\rho(\cdot,t)$. The conclusion of Theorem~\ref{thm:main} still holds. We only state the extended result, which is the same as Theorem~\ref{thm:main} without the assumption ({\bf A5}).  The detailed proof is skipped.
	
\begin{thm}\label{thm:weak}
	Under the assumptions \textup{({\bf A1})-({\bf A4})}, let $\rho(x,t)$ be the weak solution of the nonlocal LWR model \eqref{eq:nonlocal_lwr}. Then there exists a constant $\lambda>0$ that only depends on $\delta$, $w_\delta(\cdot)$ and $\rho_{\mathrm{min}}$, such that the estimate \eqref{eq:main} holds. As a corollary, $\rho(\cdot,t)$ converges to $\bar{\rho}$, which is given by \eqref{eq:rho_bar}, in $\mathbf{L}^2\left([0,1]\right)$ as $t\to\infty$.
\end{thm}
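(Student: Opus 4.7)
The plan is to prove Theorem~\ref{thm:weak} by a vanishing viscosity argument, as already suggested by the preceding discussion: regularize \eqref{eq:nonlocal_lwr} with a parabolic term of strength $\epsilon>0$, apply the energy technology of Section~\ref{sec:energy_estimate} to the smooth solutions of the regularized problem, and pass to the limit $\epsilon\to 0$ exploiting lower semi-continuity of the energy.

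First I would work with the viscous nonlocal LWR model \eqref{eq:viscous_nonlocal_lwr}. Adapting the fixed-point argument of \cite{colombo2019singular} to the periodic setting, together with the standard $\mathbf{L}^\infty$ maximum principle for parabolic equations, produces a unique weak solution $\rho_\epsilon$ satisfying $\rho_{\mathrm{min}}\leq\rho_\epsilon\leq\rho_{\mathrm{max}}$ and preserving the mean $\bar{\rho}$. Because the nonlocal velocity coefficient in \eqref{eq:viscous_nonlocal_lwr} is a smooth nonlocal functional of $\rho_\epsilon$, a bootstrap using standard Schauder and Sobolev estimates for linear parabolic equations upgrades $\rho_\epsilon$ to a $\mathbf{C}^\infty$ classical solution on $[0,1]\times(0,\infty)$. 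In particular the formal manipulations of Section~\ref{sec:energy_estimate} apply to $\rho_\epsilon$ rigorously.

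Next I would repeat the derivation of \eqref{eq:energy_dt} for $\rho_\epsilon$. The only new contribution to $dE_\epsilon/dt$ comes from the viscous term and equals
\begin{align*}
	\epsilon\int_0^1\rho_\epsilon(x,t)\partial_x^2\rho_\epsilon(x,t)\,dx=-\epsilon\int_0^1(\partial_x\rho_\epsilon(x,t))^2\,dx\leq 0,
\end{align*}
by the periodic boundary condition. Consequently Lemma~\ref{lem: nonlocal_poincare} combined with Lemma~\ref{lem:nonlinear_poincare} still yields $dE_\epsilon(t)/dt\leq -2\lambda E_\epsilon(t)$ with the same constant $\lambda=\nu(\delta)\alpha\rho_{\mathrm{min}}$, and Gronwall gives
\begin{align*}
	E_\epsilon(t)\leq e^{-2\lambda t}E_\epsilon(0)=e^{-2\lambda t}E(0),\quad\forall t\geq 0,
\end{align*}
uniformly in $\epsilon>0$.

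The main obstacle will be the limit $\epsilon\to 0$. I would extract from the maximum principle a uniform $\mathbf{L}^\infty$ bound on $\rho_\epsilon$, hence a uniform bound on the nonlocal flux, and use the equation itself to obtain a uniform control of $\partial_t\rho_\epsilon$ in a negative Sobolev norm. A compactness argument along the lines of \cite{colombo2019singular} then produces a subsequence with $\rho_\epsilon\to\rho$ strongly in $\mathbf{C}([0,T];\mathbf{L}^1([0,1]))$ for every $T>0$, and by the uniqueness assertion in Theorem~\ref{thm:weak_solution} the limit is the unique weak solution of \eqref{eq:nonlocal_lwr}. Since the functional $\rho\mapsto\norm{\rho-\bar{\rho}}_{\mathbf{L}^2}^2$ is lower semi-continuous under this mode of convergence, passing to the $\liminf$ in the uniform Gronwall bound yields $E(t)\leq e^{-2\lambda t}E(0)$, which is exactly \eqref{eq:main}. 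The most delicate step is verifying that no extra dissipation is lost in this limit, i.e., that the constant $\lambda$ inherited from the viscous level is precisely the one in Theorem~\ref{thm:main} and not degraded by the weak passage.
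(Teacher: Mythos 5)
Your proposal is correct and follows essentially the same route the paper itself sketches in Section~\ref{sec:discussion}: viscous regularization via \eqref{eq:viscous_nonlocal_lwr}, smoothness of $\rho_\epsilon$ by bootstrap, the energy estimate of Section~\ref{sec:energy_estimate} applied at the viscous level, and passage to the limit $\epsilon\to0$ using lower semi-continuity of the energy. Your treatment of the viscous term (simply discarding $-\epsilon\int_0^1(\partial_x\rho_\epsilon)^2\,dx\leq0$) is in fact slightly cleaner than the paper's, which keeps it to record an $\epsilon$-improved rate $e^{-(\lambda+c\epsilon)t}$ that is not needed in the limit, and your final worry is moot since the Gronwall bound is uniform in $\epsilon$ with the same $\lambda$.
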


We now make a couple of additional remarks.

\begin{rem}\label{rem:speed-function}
Let us first  have a discussion on the choice of the speed function. 
The assumption $U(\rho)=1-\rho$ allows us to split the local and nonlocal terms in \eqref{eq:nonlocal_diffusion} and carry out the energy estimate. It is interesting to consider extensions of Theorem~\ref{thm:main} for more general forms of nonlocal velocity selection.
For example, we consider the nonlocal velocity in the following form:
\begin{align}
	u(x,t)=U_0(\rho(x,t))\left(1-\int_0^\delta\rho(x+s,t)w_\delta(s)\,ds\right),\label{eq:reform}
\end{align}
which leads to the generalized nonlocal LWR model \eqref{eq:generalize_1} with $g(\rho)=\rho U_0(\rho)$. When $U_0\equiv1$, it reduces to the case in Theorem~\ref{thm:main}. Based on \eqref{eq:reform}, the generalized model \eqref{eq:generalize_1} can be rewritten as:
\begin{align*}
	\partial_t\rho(x,t)+\partial_x\left(g(\rho(x,t))\left(1-\rho(x,t)\right)\right)=\nu(\delta)\partial_x\left(g(\rho(x,t))\mathcal{D}_x^\delta\rho(x,t)\right).
\end{align*}
\cite{Chiarello2018} proved the same well-posedness results for \eqref{eq:generalize_1} as those for \eqref{eq:nonlocal_lwr} assuming that $g=g(\rho)$ is positive and $\mathbf{C}^1$ smooth. If $g(\rho)$ is bounded away from zero when $\rho\in[\rho_{\mathrm{min}},\rho_{\mathrm{max}}]$, the Hardy-Littlewood rearrangement inequality allows us to remove the nonlinear term $g(\rho)$ and get an estimate similar to \eqref{eq:nonlinear_poincare}. Thus, the conclusion of Theorem~\ref{thm:main} remains true.
\end{rem}

\begin{rem}\label{rem:convergence_speed_delta}
Let us now discuss the exponent $\lambda$ in the exponential decay estimate \eqref{eq:main}.
For a rescaled kernel $w_\delta(s)=w_1(s/\delta)/\delta$ as discussed in Proposition~\ref{prop:rescale},
one can examine how $\lambda$ depends on the kernel $w_1(s)$, the nonlocal range $\delta$ and the initial data. 
The theoretical analysis in the proof of Theorem~\ref{thm:main} gives a lower bound for the exponent $\lambda$ as $\lambda=\nu(\delta)\alpha  \rho_{\mathrm{min}} 
= \delta \nu_1 \alpha  \rho_{\mathrm{min}}$, where $\alpha$ and $\nu_1$ are determined by the kernel $w_1(s)$ and $\rho_{\mathrm{min}}$ is the minimum of initial data. Moreover,  a sharper estimate can be derived for some special cases.
For example, with  the solution (or initial data) sufficiently close to the uniform flow density $\bar{\rho}$, one may replace 
$\rho_{\mathrm{min}}$ by $\bar{\rho}$. Then, for the linear decreasing kernel $w_1(s)=2(1-s)$, similar to calculations carried out earlier, we can get 
\begin{align}
	\lambda=\frac{2}{\delta}\left(1-\frac{\sin(2\pi \delta)}{2\pi \delta}\right)\bar{\rho},\quad \text{for }\;
	\delta\in (0, 1].	
	\label{decay_rate_precise}
\end{align}
For sufficiently small $\delta>0$, the above leads to 
\begin{align}
	\lambda=\frac{4\pi^2}{3}\delta\bar{\rho}.\label{decay_rate_asymptotic}
\end{align}
For numerical validation of these estimates, we refer to Section~\ref{sec:numerical_exp}.
Based on both theoretical estimates and numerical observations, one may consider accelerating the convergence to the uniform flow by increasing the nonlocal range $\delta$, at least in a proper range. In traffic terms, this serves to supplement the design principle presented earlier: while nearby information should be given more attention, within a proper nonlocal range, utilizing information gathered over a wider domain 
could bring more benefits. However, as $\lambda$ may not stay monotonically increasing for all $\delta$, the acceleration might become
less effective if $\delta$ gets too large. Thus, one should choose suitably 
the range of nonlocal information to be utilized. Meanwhile, it should be noted that 
the value of $\delta$ should also be properly confined in practice to avoid any significant deviation of each vehicle's driving speed from its desired local speed in consideration of driving safety.
Likewise, we can also see from the above estimates that  the convergence gets faster with larger values of  $\bar{\rho}$. We can attribute this property to the nonlinear dependence of the diffusion introduced to the system \eqref{eq:nonlocal_diffusion} on the traffic density.
\end{rem}

Finally, let us make some comparisons between the nonlocal LWR model \eqref{eq:nonlocal_lwr} and the local one \eqref{eq:lwr}. 
In particular, in terms of the practical implication on traffic flows, it is interesting to examine the rate at which the traffic density would get back to the uniform state.
On one hand, as shown in Theorem~\ref{thm:main}, the solution of \eqref{eq:nonlocal_lwr} has an exponential convergence towards the uniform flow. On the other hand, \cite{debussche2009long} showed that every solution of \eqref{eq:lwr} with the periodic boundary condition converges to the uniform flow $\bar{\rho}$ given by \eqref{eq:rho_bar} as $t\to\infty$, except the case $\bar{\rho}=0.5$ in which the local flux $f(\rho)=\rho(1-\rho)$ in \eqref{eq:lwr} is degenerate.  
However, the convergence will be much slower than the exponential convergence of the nonlocal LWR model \eqref{eq:nonlocal_lwr}. We will demonstrate the asymptotic convergence speed of the local LWR model \eqref{eq:lwr} in the following example. 

Consider the equation \eqref{eq:lwr} with the following linear initial data:
\begin{align}
	\rho_0(x)=\beta x,\quad x\in[0,1],\label{eq:linear_ini}
\end{align}
where $\beta\in[0,1]$ is a constant. In this case, \eqref{eq:lwr} can be solved explicitly and the solution is a piecewise linear function. When $0\leq t\leq\frac1{2\beta}$, there is a moving rarefaction wave and the solution is give by:
\begin{align*}
	\rho(x,t)=\begin{dcases}
	    \frac{t-x}{2t},\quad &(1-2\beta)t\leq x<t;\\
	    \frac{\beta(x-t)}{1-2\beta t},\quad &t\leq x<(1-2\beta)t+1.
	\end{dcases}
\end{align*}
When $t>\frac1{2\beta}$, the solution develops a shock wave. The shock wave moves at a constant speed $1-\beta$ and has a jump from $\rho_l(t)=\frac\beta2-\frac1{4t}$ to $\rho_r(t)=\frac\beta2+\frac1{4t}$.
Before the shock formation ($t\leq\frac1{2\beta}$), the $\mathbf{L}^2$ error $\norm{\rho(\cdot,t)-\bar{\rho}}_{\mathbf{L}^2}$ between the solution and the uniform flow is constant; After the shock formation ($t>\frac1{2\beta}$), a direct calculation gives:
\begin{align}
	\norm{\rho(\cdot,t)-\bar{\rho}}_{\mathbf{L}^2}=\frac{1}{2\sqrt{12}t}.\label{eq:local_linear_convergence}
\end{align}
That is, the solution converges to the uniform flow when $t\to\infty$ with algebraic decay rate, which means that it would take much more time in the local case for the traffic to get to the uniform flow than that predicted in the nonlocal case.

\subsection{A counterexample with the constant kernel}\label{sec:counter_example}

Suppose $\delta=1/m$ where $m$ is a positive integer. We pick the constant kernel:
\begin{align}
	w_\delta(s)=\frac1\delta,\quad s\in[0,\delta].\label{eq:constant_kernel}
\end{align}
Suppose that the initial data $\rho_0(x)$ is periodic with period $\delta$, the solution of the nonlocal LWR model \eqref{eq:nonlocal_lwr} can be explicitly given by $\rho(x,t)=\rho_0(x-\bar{u}t)$ where $\bar{u}=1-\bar{\rho}$. Note that, at any time $t$, the density $\rho(\cdot,t)$ is a translation of $\rho_0$ and hence periodic with period $\delta$. Consequently, the velocity:
\begin{align*}
	u(x,t)=1-\int_0^\delta\rho(x+s,t)w_\delta(s)\,ds=1-\frac1\delta\int_0^\delta\rho(x+s,t)\,ds=1-\bar{\rho}=\bar{u},
\end{align*}
is a fixed constant. The nonlocal LWR model \eqref{eq:nonlocal_lwr} then becomes a scalar transport equation:
\begin{align*}
	\rho_t+\bar{u}\rho_x=0,
\end{align*}
whose solution is the traveling wave $\rho(x,t)=\rho_0(x-\bar{u}t)$. The traveling wave solution never converges to the uniform flow as $t\to\infty$ unless $\rho_0(x)$ is constant. The same form of counterexample was also proposed in \cite{karafyllis2020analysis} with $\delta\in\mathbb{Q}$ and $\rho_0(x-\bar{u}t)$ being a sine-wave.

This counterexample justifies the key role of the assumption ({\bf A3}). When $\rho_0(x)$ is $\mathbf{C}^1$ smooth and bounded between $\rho_{\text{min}}>0$ and $\rho_{\text{max}}\leq1$, so is the solution. All assumptions in Theorem~\ref{thm:main} are satisfied expect ({\bf A3}). However, the conclusion of the theorem fails to be true because we no longer have the nonlocal Poincare inequality \eqref{eq:nonlocal_poincare} for the constant kernel. To wit, we calculate eigenvalues of the nonlocal gradient operator $\mathcal{D}_x^\delta$ with the constant kernel defined in \eqref{eq:constant_kernel}. For the eigenfunction $e^{2\pi imx}$ with frequency $m=1/\delta$, the real part of the corresponding eigenvalue is:
\begin{align*}
	b_\delta(m)=\frac1{\delta\nu(\delta)}\int_0^\delta \sin(2\pi ms)\,ds=0,
\end{align*}
which makes $\alpha=0$ in \eqref{eq:estimate_alpha}. 
In other words, the properties of the nonlocal kernel $w_\delta(\cdot)$ are essential to guarantee that the nonlocal term in \eqref{eq:nonlocal_diffusion} adds appropriate diffusion effect to dissipate traffic waves.

\section{Numerical experiments}\label{sec:numerical_exp}

In this section, we present results of numerical experiments to  further illustrate the established findings and to explore 
cases not covered by the theoretical results.
The following models are considered.
\begin{itemize}
	\item The local LWR \eqref{eq:lwr};
	\item The nonlocal LWR \eqref{eq:nonlocal_lwr} with the linear decreasing kernel $w_\delta(s)=2(\delta-s)/\delta^2$;
	\item The nonlocal LWR \eqref{eq:nonlocal_lwr} with the constant kernel $w_\delta(s)=1/\delta$.
\end{itemize}

All three models are solved by the Lax-Friedrichs scheme with spatial mesh size $\Delta x=2\times10^{-4}$. For more details about this numerical scheme applying to the nonlocal LWR, see \cite{goatin2016well}. To visualize the evolution of traffic densities solved from the models, we plot their snapshots at selected times, with different colors. Furthermore, we compare asymptotic convergence speeds of the solutions to the uniform flow by plotting the $\mathbf{L}^2$ error $\norm{\rho(\cdot,t)-\bar{\rho}}_{\mathbf{L}^2}$ as a function of time $t$. We present these convergence speed plots on different time scales, that is, the semi-log plot to represent the cases with an exponential decay in time, i.e., $\norm{\rho(\cdot,t)-\bar{\rho}}_{\mathbf{L}^2}\propto e^{-\lambda t}$ for some $\lambda>0$, and the log-log plot for cases showing only an algebraic decay in time, in particular, $\norm{\rho(\cdot,t)-\bar{\rho}}_{\mathbf{L}^2}\propto 1/t$. We also remark that, with the linear decreasing kernel, the exponent $\lambda$ of the exponential decay rate can be estimated theoretically by \eqref{decay_rate_precise}
and \eqref{decay_rate_asymptotic}. These theoretical estimates are compared with the value of $\lambda$ estimated from the numerical solutions. 

{\bf Experiment 1.} The first experiment aims to validate the quick dissipation of traffic waves established in Theorem~\ref{thm:main}. In this experiment, we choose a bell-shape initial data:
\begin{align*}
	\rho_0(x)=0.4+0.6\exp\left(-100(x-0.5)^2\right).
\end{align*}
It represents the scenario that initially vehicles cluster near $x=0.5$ and the traffic is lighter in other places. 
We compare solutions of three models solved with the initial data. The results are plotted in Figure~\ref{fig:bellshape}.

For the local LWR, the solution first develops a shock wave from the smooth initial data. Then the shock wave dissipates at a speed no faster than the algebraic decay.  At time $t=6$, the $\mathbf{L}^2$ error $\norm{\rho(\cdot,t)-\bar{\rho}}_{\mathbf{L}^2}$ is on the scale of $10^{-2}$ and one can still visually observe a jump in density at the shock.

For the nonlocal LWR, the nonlocal range is set as $\delta=0.2$.
With the linear decreasing kernel, the solution remains smooth and the initial high density near $x=0.5$ quickly dissipates. The solution converges to the uniform flow exponentially with the numerically estimated exponent $\lambda=1.26$, which is very close to the theoretical estimate $\lambda=1.23$ given by \eqref{decay_rate_precise}.
 At time $t=6$, the whole density profile is nearly uniform with the $\mathbf{L}^2$ error on the scale of $10^{-4}$.
With the constant kernel, the solution first has an exponential convergence to the uniform flow, but the $\mathbf{L}^2$ error stagnates on the scale of $10^{-3}$ after $t=2.5$, which means that there are non-dissipative traffic waves with small amplitudes. The contrast between the case using a linear decreasing kernel and that with a constant kernel
helps to illustrate the 
natural design principle concerning the use of nonlocal information, that is, placing more attention on the nonlocal density information of nearby vehicles could result in better traffic conditions.

{\bf Experiment 2.} The second experiment aims to check the case with linear initial data as discussed in Section~\ref{sec:discussion}. In this experiment, we choose the initial data to be a linear function as in \eqref{eq:linear_ini} with $\beta=0.5$.
We compare solutions of three models solved with the initial data.
The results are plotted in Figure~\ref{fig:linear}.

For the local LWR, the solution is a piecewise linear function as given in Section~\ref{sec:discussion}. At $t=\frac1{2\beta}=1$, a shock wave forms. Then the shock wave dissipates and the solution converges to the uniform flow with an algebraic decay in the $\mathbf{L}^2$ error, i.e., $\norm{\rho(\cdot,t)-\bar{\rho}}_{\mathbf{L}^2}=a/t$ where the estimated value of the coefficient is $a=0.142$. The result validates the analytically derived value $a=\frac{1}{2\sqrt{12}}=0.1443\dots$  based on the estimate in \eqref{eq:local_linear_convergence}.

For the nonlocal LWR, the nonlocal range is set as $\delta=0.2$. 
With the linear decreasing kernel, the exponential convergence to the uniform flow is observed with the numerically estimated exponent $\lambda=0.66$, also effectively predicted by the theoretical estimate $\lambda=0.61$ given by \eqref{decay_rate_precise}. 
Meanwhile, we also observe that the traffic density is no longer piecewise linear when $t>0$. A complicated  dynamic process is involved in the transition from the linear initial data to the uniform flow. This shows that the nonlocal LWR may have richer transient behaviors than the local LWR. 
With the constant kernel, similar patterns are observed but the $\mathbf{L}^2$ error stagnates on the scale of $10^{-2}$ after $t=2.5$ because of the existence of non-dissipative traffic waves.

{\bf Experiment 3.} The third experiment aims to validate the counterexample given in Section~\ref{sec:counter_example}. In this experiment, we choose the initial data to be a sine-wave:
\begin{align*}
	\rho_0(x)=0.5+0.4\sin(4\pi x),
\end{align*}
which is periodic with a period $0.5$. We compare two solutions with the initial data: one solved from the nonlocal LWR with the linear decreasing kernel, the other solved from that with the constant kernel. For both cases, the nonlocal range is set to be the same as the period of the initial data, i.e., $\delta=0.5$.
The results are plotted in Figure~\ref{fig:sinewave}.

With the linear decreasing kernel, the sine wave quickly dissipates and the solution converges to the uniform flow exponentially with the numerically estimated exponent $\lambda=2.02$. In comparison, the theoretical estimate \eqref{decay_rate_precise} gives $\lambda=2.00$ while the asymptotic estimate \eqref{decay_rate_asymptotic} gives $\lambda=3.29$. The result shows that the the actual exponent $\lambda$ may deviate away from the linear relation described by \eqref{decay_rate_asymptotic} when $\delta$ is large but can still be effectively predicted by \eqref{decay_rate_precise}.

With the constant kernel, the solution is a traveling wave moving at the constant speed $\bar{u}=1-\bar{\rho}=0.5$ and the $\mathbf{L}^2$ error stays constant in time and never decays. In this case, vehicles need to repeatedly accelerate and decelerate in accordance with the oscillations in traffic density, resulting in a worse traffic situation even
in comparison to that modeled by the local LWR.   This again reinforces the advantage of paying more attention to the nearby density 
information  when nonlocal information is utilized.

{\bf Experiment 4.} The fourth experiment aims to examine the impact of the nonlocal range $\delta$. In this experiment, we focus on the nonlocal LWR with the linear decreasing kernel, and choose a piecewise constant initial data:
\begin{align*}
	\rho_0(x)=\begin{dcases} 0.25,\quad 0\leq x<0.5;\\0.75,\quad 0.5\leq x<1.\end{dcases}
\end{align*}
For various values of the nonlocal range $\delta$, solutions of the model with the initial data are compared. 
The results are plotted in Figure~\ref{fig:piece_const}.

In the first row of Figure~\ref{fig:piece_const}, we plot traffic density evolution for the solutions with $\delta=0.1$ and $\delta=0.2$. Although the initial data is discontinuous, the dissipation of traffic waves and the convergence to the uniform flow can still be observed.
This result indicates that the regularity assumption in Theorem~\ref{thm:main} might not be necessary, as discussed in Section~\ref{sec:discussion}.

In the bottom left figure of Figure~\ref{fig:piece_const}, we compare the decay rates of convergence for the solutions with $\delta$ ranging from $0.1$ to $0.3$ with a step $0.05$. The result shows that all solutions have exponential convergence to the uniform flow for these values of $\delta$, while the convergence is faster with a larger $\delta$.
The exponent $\lambda$ numerically estimated from the solutions with these values of $\delta$ are compared with the theoretical estimates given by \eqref{decay_rate_precise}, as shown in the bottom right figure of Figure~\ref{fig:piece_const}. One can observe an effective match between the theoretical and numerical estimates.

{\bf Experiment 5.} Finally, let us examine how the mean density affects the decay rate of convergence.
In this experiment, we focus on the nonlocal LWR with the linear decreasing kernel. The nonlocal range is fixed to be $\delta=0.2$. We choose a family of bell-shape initial data:
\begin{align*}
	\rho_0(x)=\rho_{\text{min}}+0.6\exp\left(-100(x-0.5)^2\right),
\end{align*}
with $\rho_{\text{min}}\in[0, 0.4]$. Such a family of initial data have the same variation but different mean densities. We compare the decay rates of convergence for the solutions with $\rho_{\text{min}}$ ranging from $0$ to $0.4$ with a step $0.1$, as shown on the left side of Figure \ref{fig:ini}.

We first observe that even in the case with $\rho_{\text{min}}=0$, meaning that the initial data is supported on a subinterval of the domain and vanishes outside, the solution still converges to the uniform flow exponentially.
This example shows the possibility that the established global stability result may still be true for nonnegative initial data with positive mean densities, which presents an interesting problem to be further studied theoretically in the future.
Moreover, we observe that the convergence becomes faster as the mean density of the initial data gradually increases, again consistent with the theoretical findings discussed earlier.  To better capture the dependence of the exponent $\lambda$ on the mean density $\bar{\rho}$, we do a linear fitting 
of the numerically estimated values of $\lambda$ with respect to $\bar{\rho}$, see the plot on the right side of Figure~\ref{fig:ini}. The result shows that a linear relation of the form $\lambda=2.53\bar{\rho}$ can effectively describe the dependence of $\lambda$ on $\bar{\rho}$. 
As a comparison, the theoretical estimate \eqref{decay_rate_precise} gives $\lambda=2.43\bar{\rho}$, which is also shown on the right side of Figure~\ref{fig:ini}. We observe that the theoretical estimate effectively matches numerical observations. In addition, the largest deviation of the numerical estimate of $\lambda$ from both the linear fitting and the theoretical estimate occurs when $\rho_{\text{min}}=0$. This is an interesting phenomenon indicating that there might be a sharper estimate with the existence of vacuum densities in initial data. 

\begin{figure}[htbp]
  \centering
  \begin{subfigure}{.32\textwidth}
    \includegraphics[width=\textwidth]{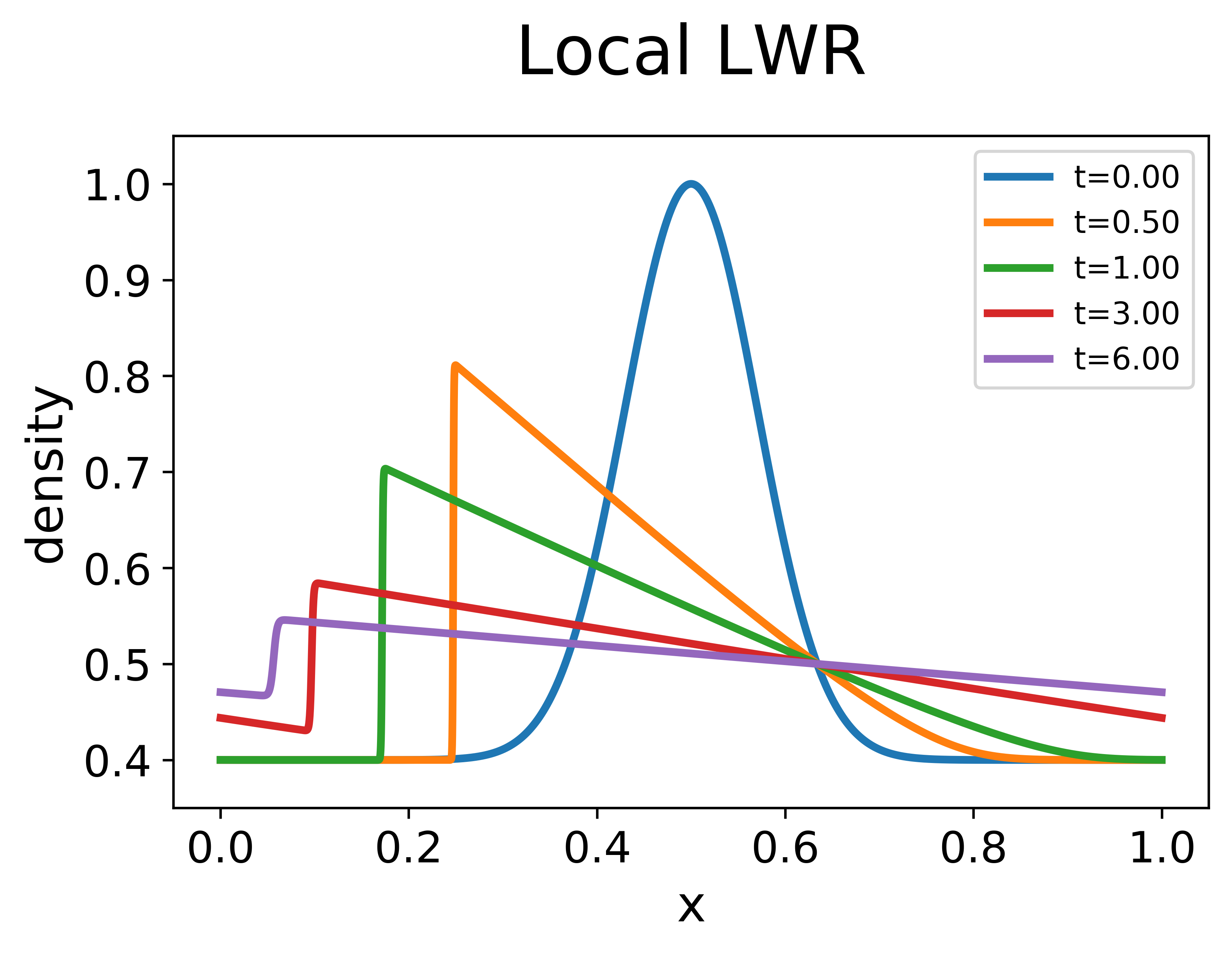}
  \end{subfigure}
  \begin{subfigure}{.32\textwidth}
    \includegraphics[width=\textwidth]{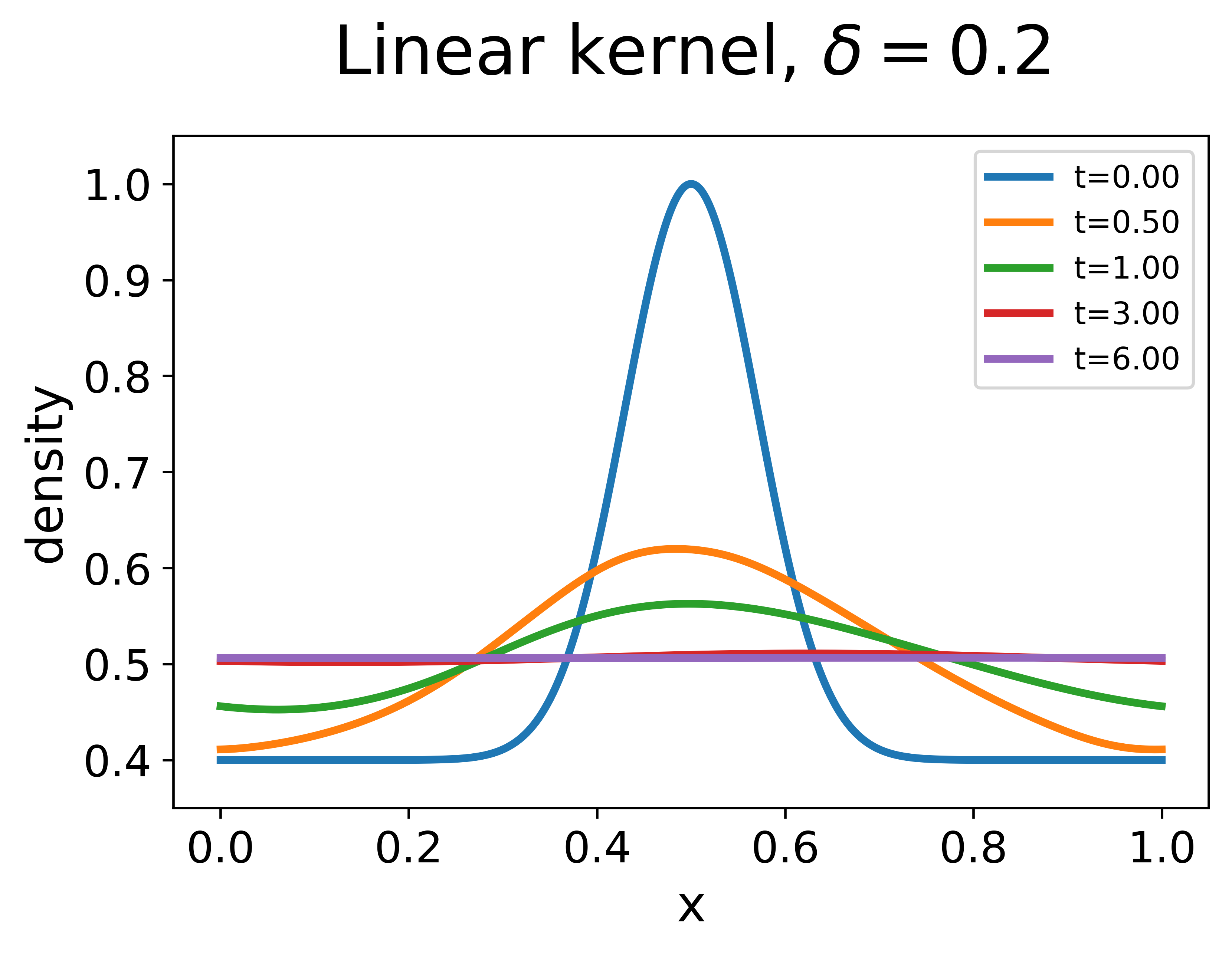}
  \end{subfigure}
  \begin{subfigure}{.32\textwidth}
    \includegraphics[width=\textwidth]{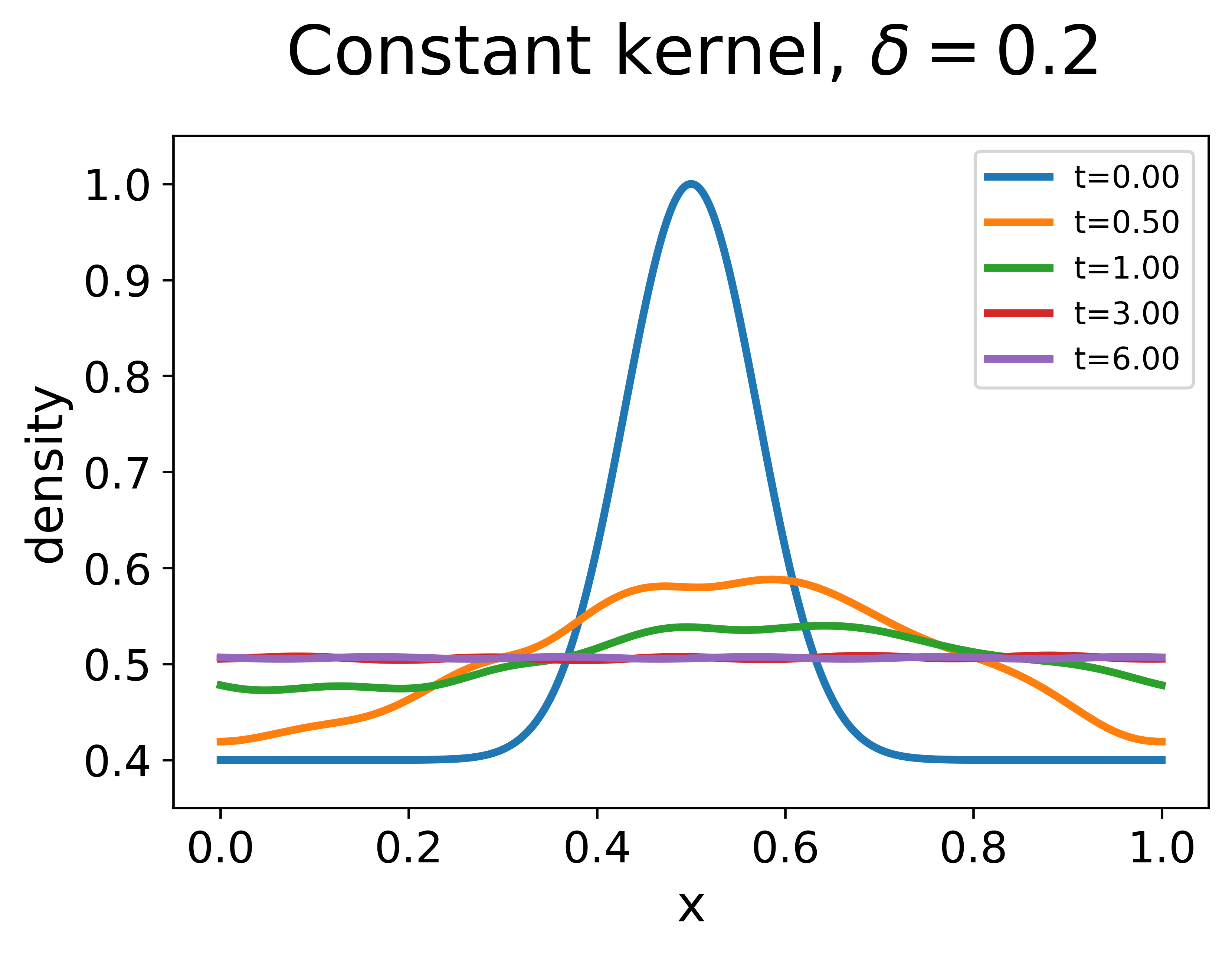}
  \end{subfigure}
  \begin{subfigure}{.32\textwidth}
    \includegraphics[width=\textwidth]{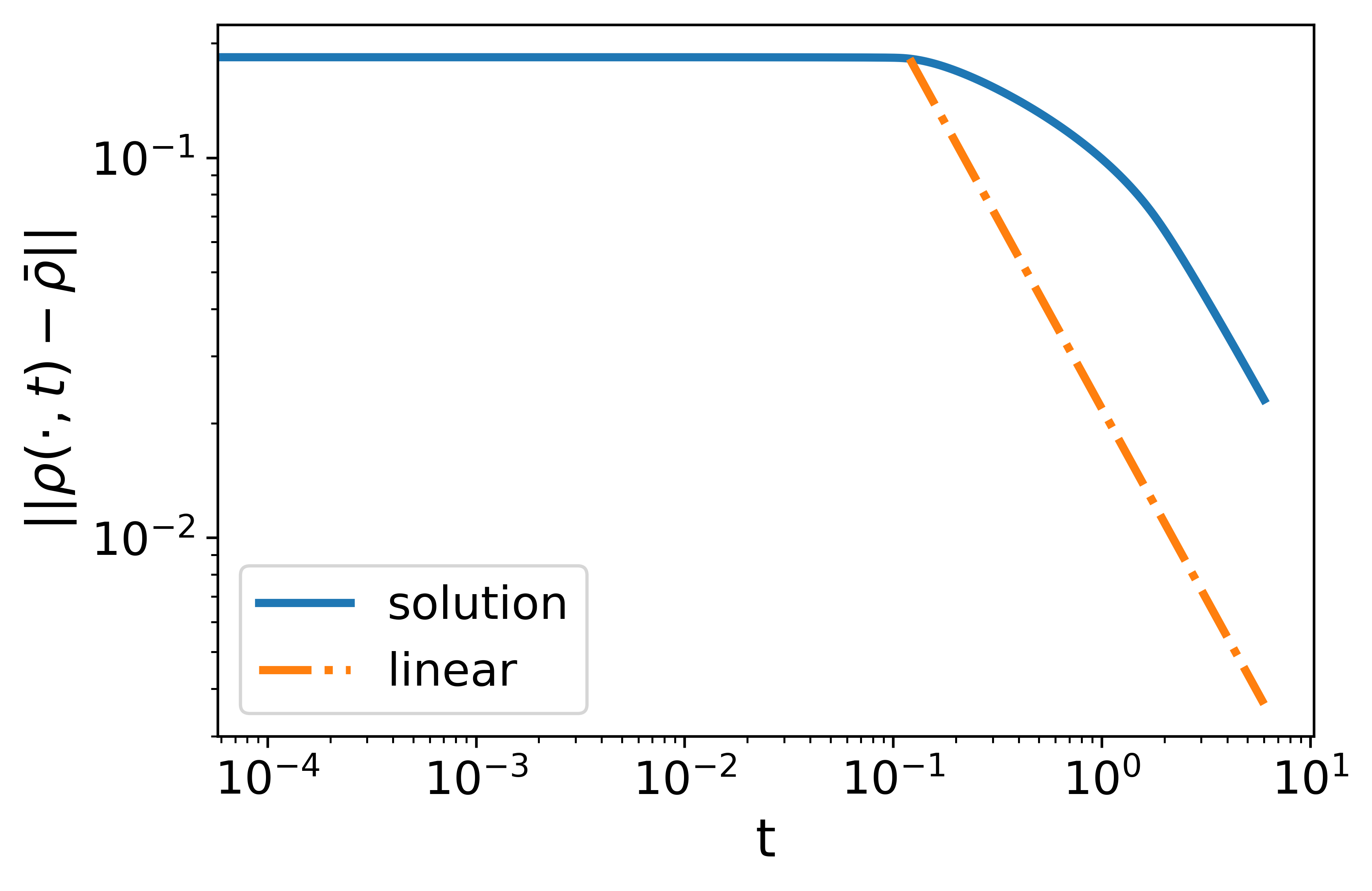}
  \end{subfigure}
  \begin{subfigure}{.32\textwidth}
    \includegraphics[width=\textwidth]{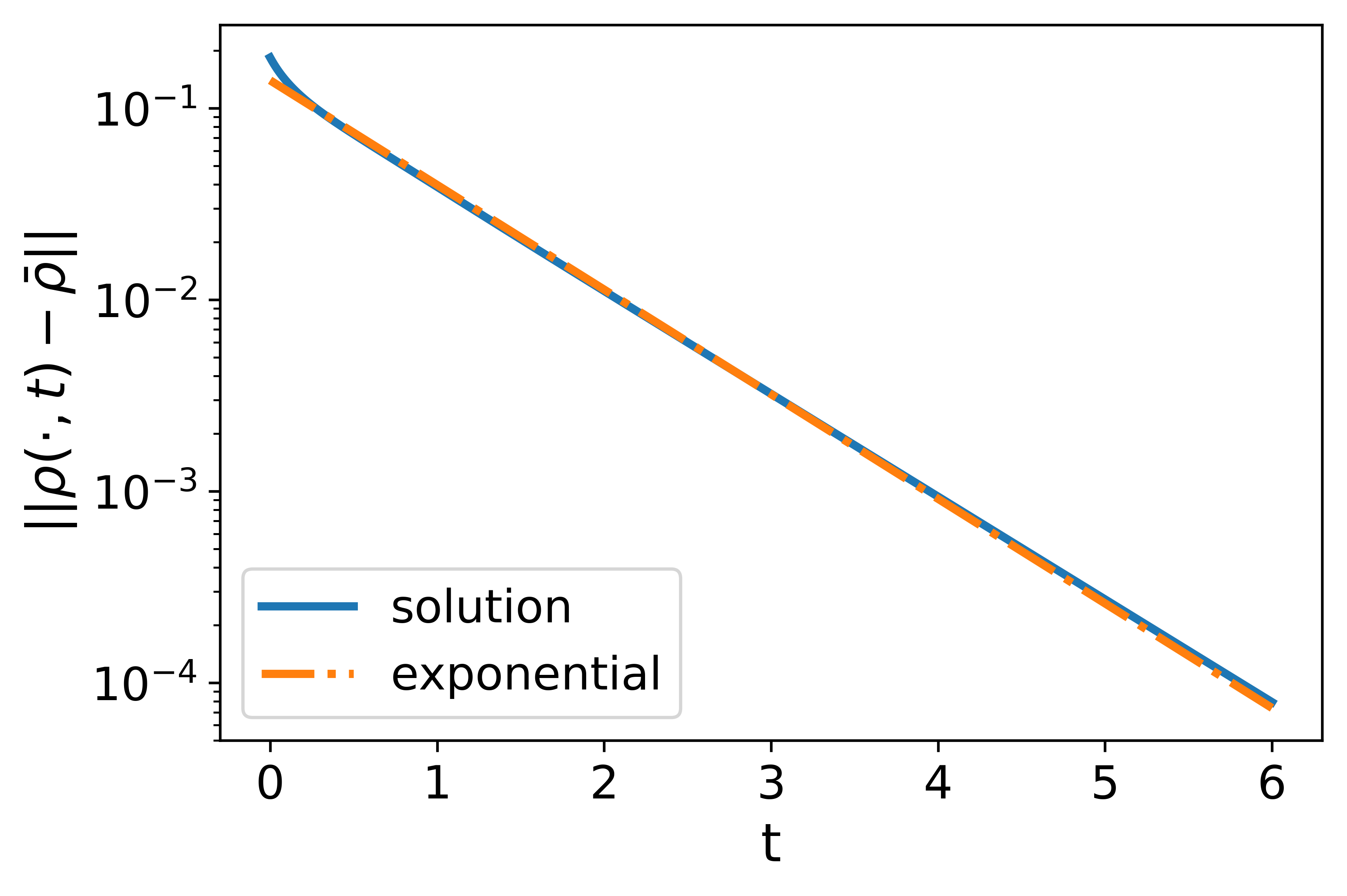}
  \end{subfigure}
  \begin{subfigure}{.32\textwidth}
    \includegraphics[width=\textwidth]{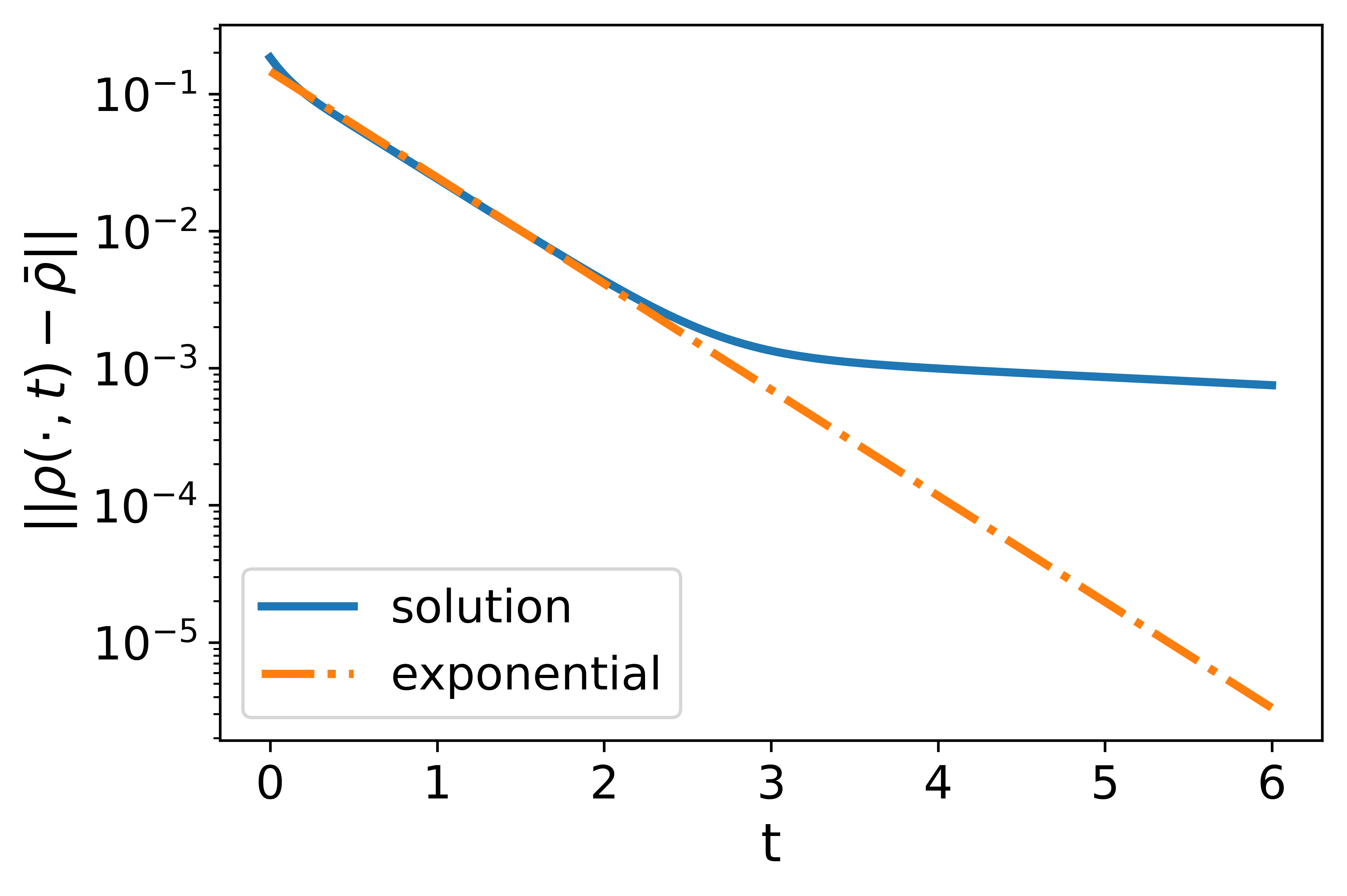}
  \end{subfigure}
  \caption{Compare solutions from different models with the bell-shape initial data. 
  }
  \label{fig:bellshape}
\end{figure}

\begin{figure}[htbp]
  \centering
  \begin{subfigure}{.32\textwidth}
    \includegraphics[width=\textwidth]{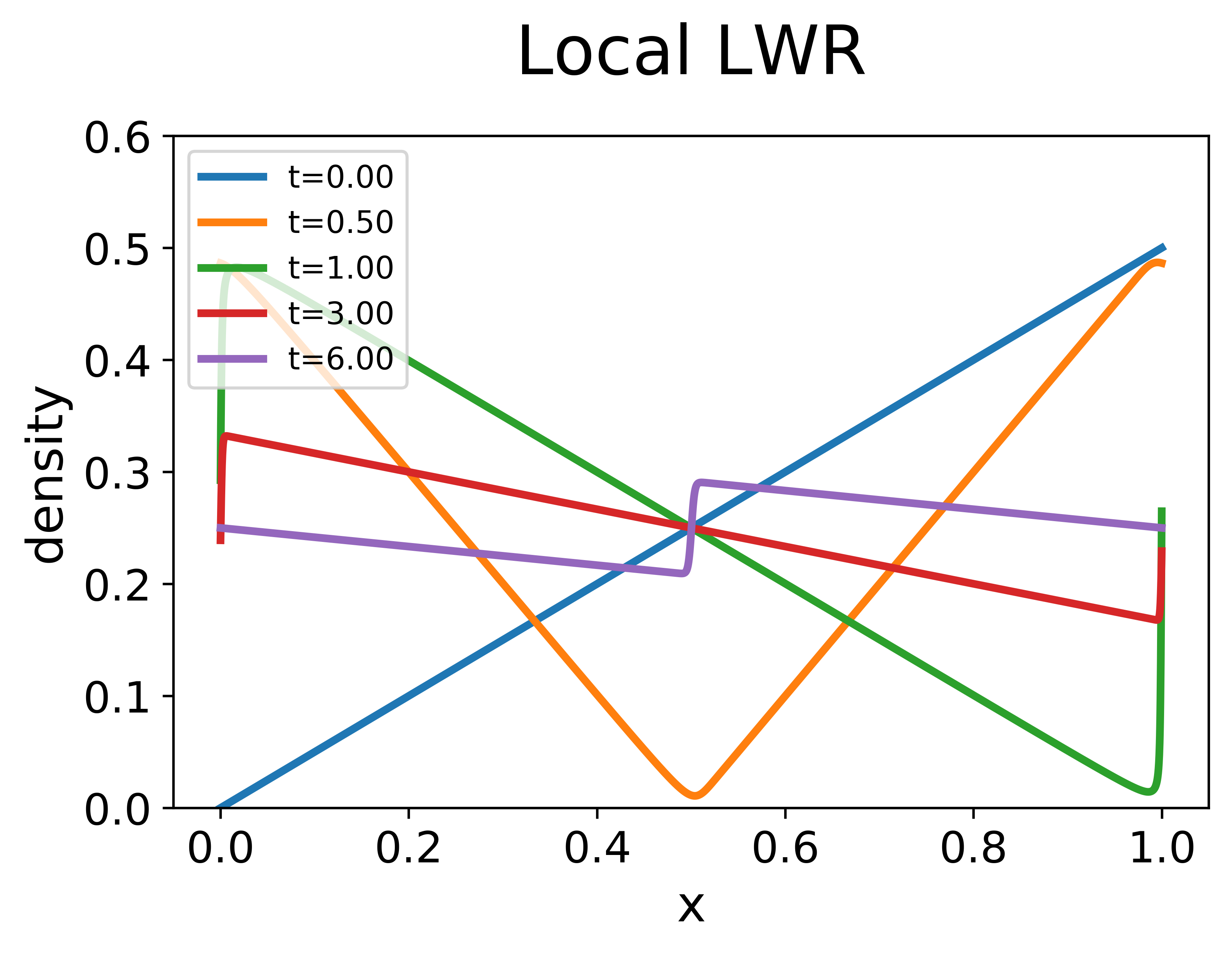}
  \end{subfigure}
  \begin{subfigure}{.32\textwidth}
    \includegraphics[width=\textwidth]{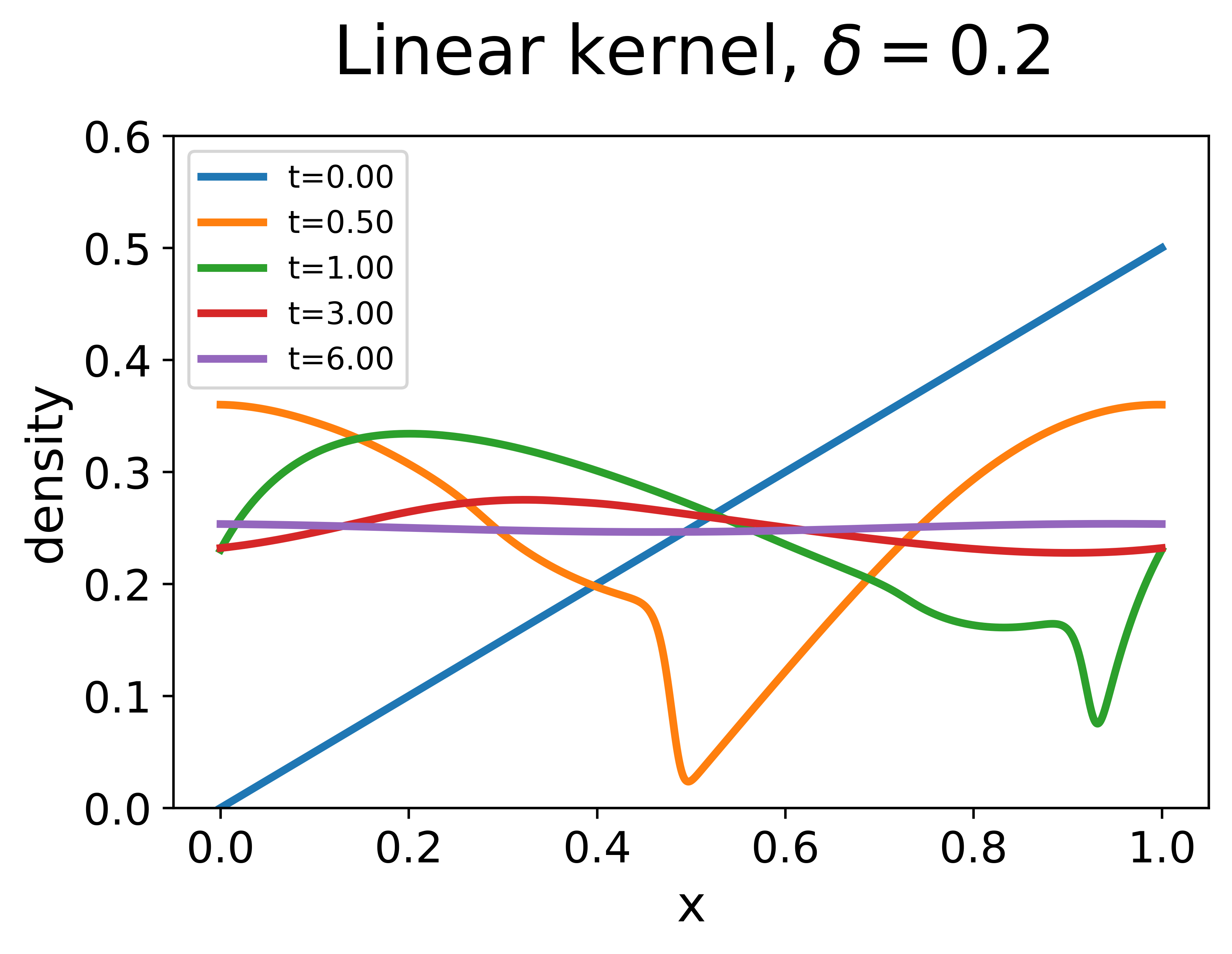}
  \end{subfigure}
  \begin{subfigure}{.32\textwidth}
    \includegraphics[width=\textwidth]{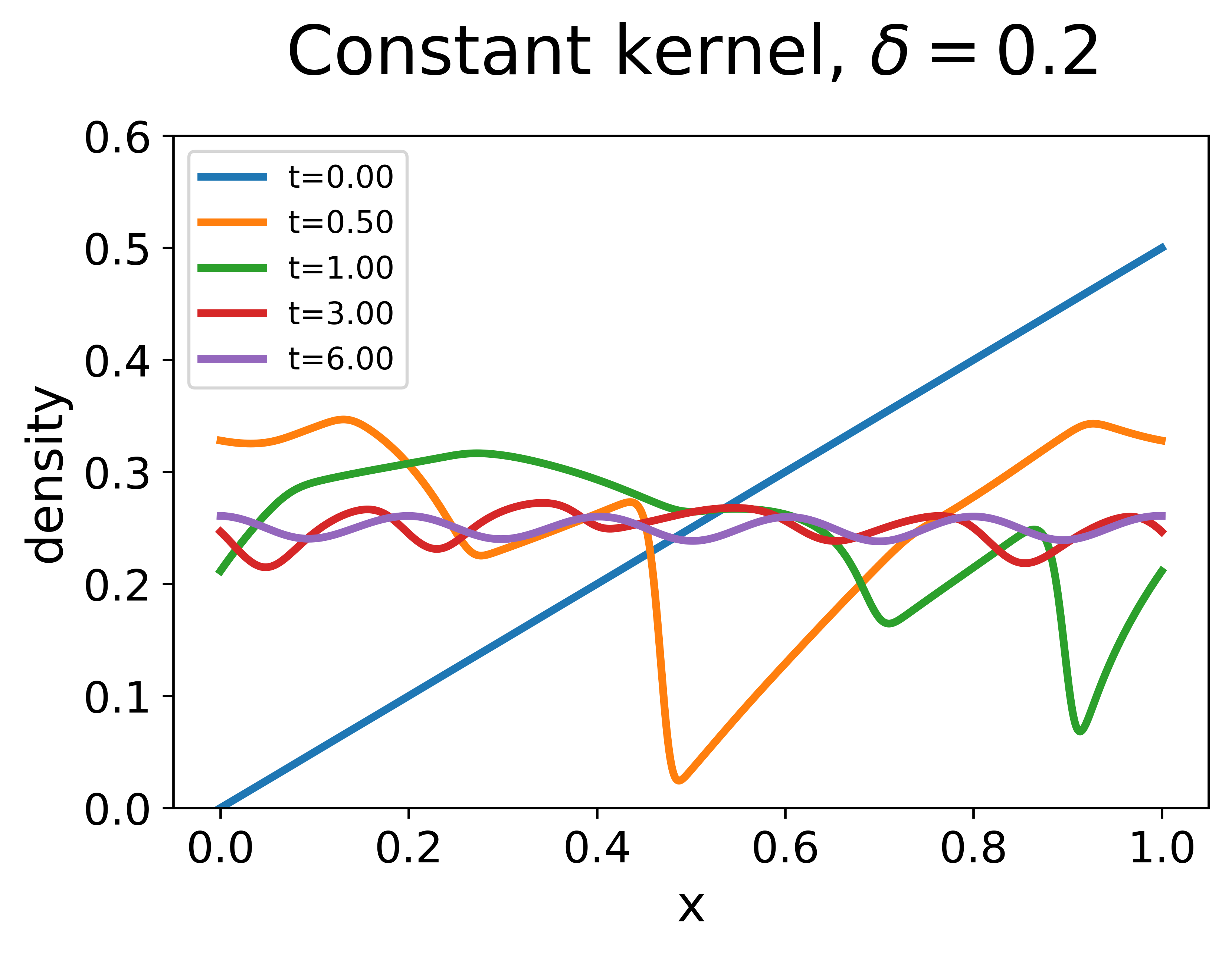}
  \end{subfigure}
  \begin{subfigure}{.32\textwidth}
    \includegraphics[width=\textwidth]{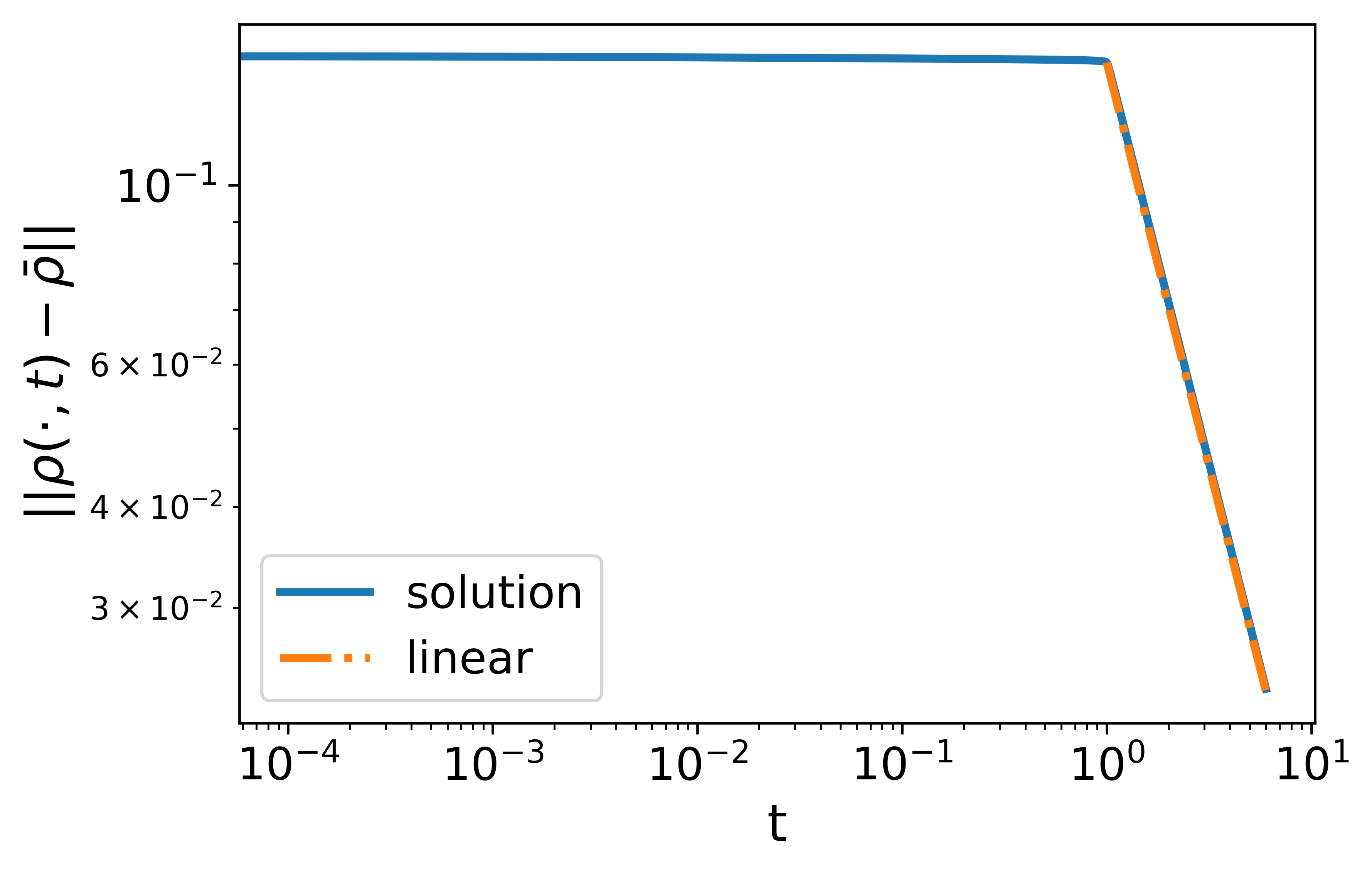}
  \end{subfigure}
  \begin{subfigure}{.32\textwidth}
    \includegraphics[width=\textwidth]{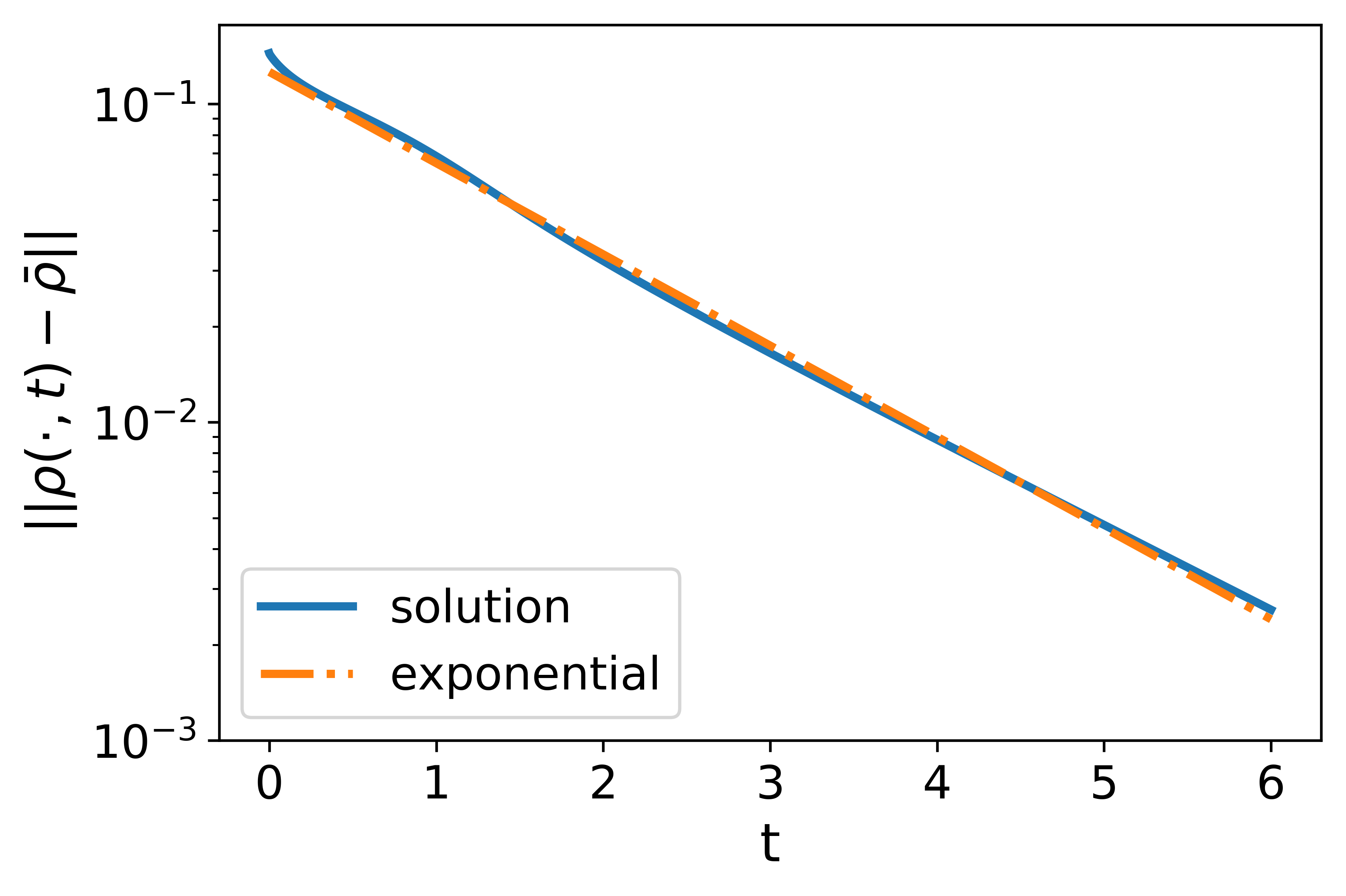}
  \end{subfigure}
  \begin{subfigure}{.32\textwidth}
    \includegraphics[width=\textwidth]{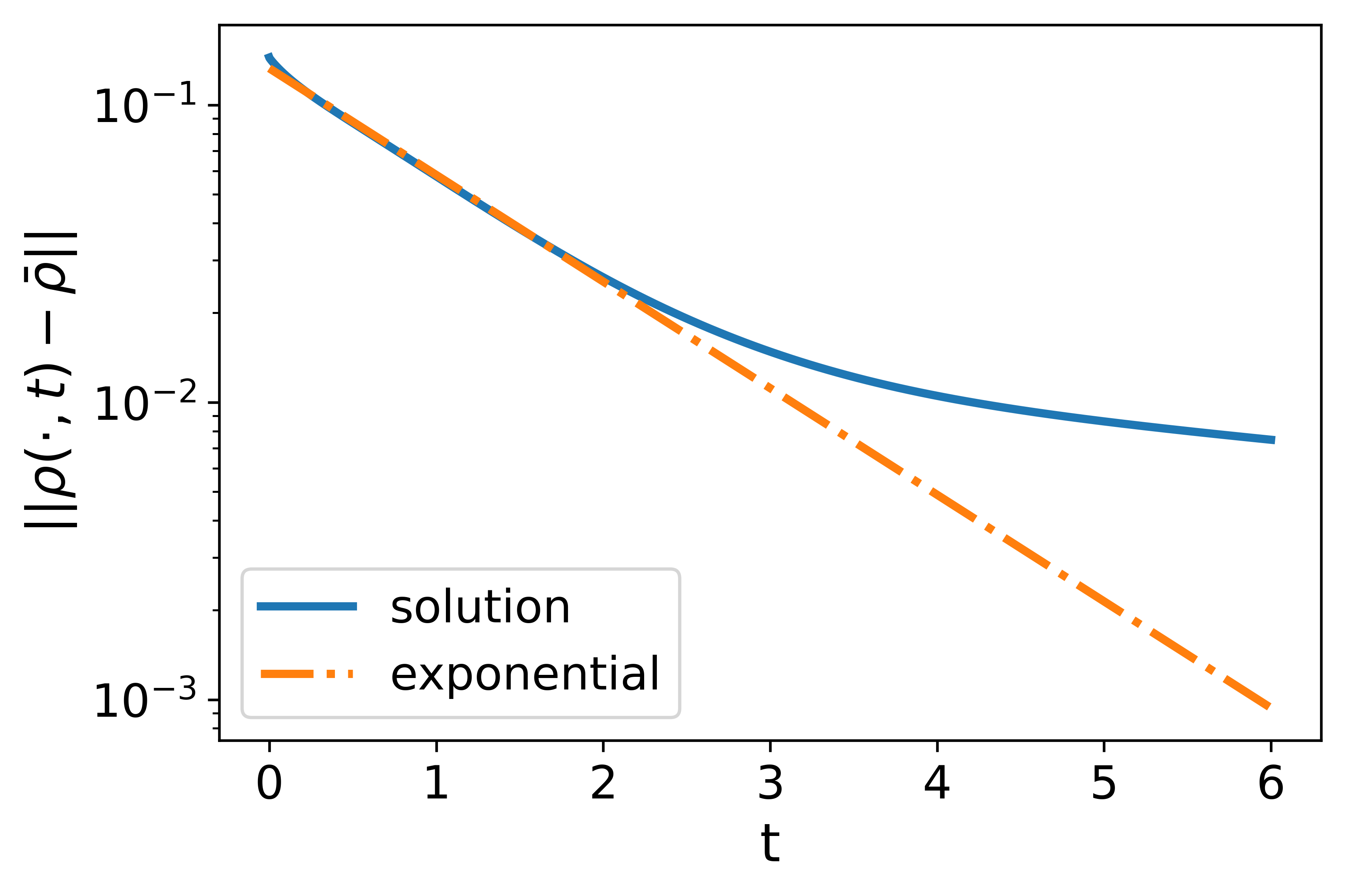}
  \end{subfigure}
  \caption{Compare solutions from different models with the linear initial data}
  \label{fig:linear}
\end{figure}

\begin{figure}[htbp]
  \centering
  \begin{subfigure}{.32\textwidth}
    \includegraphics[width=\textwidth]{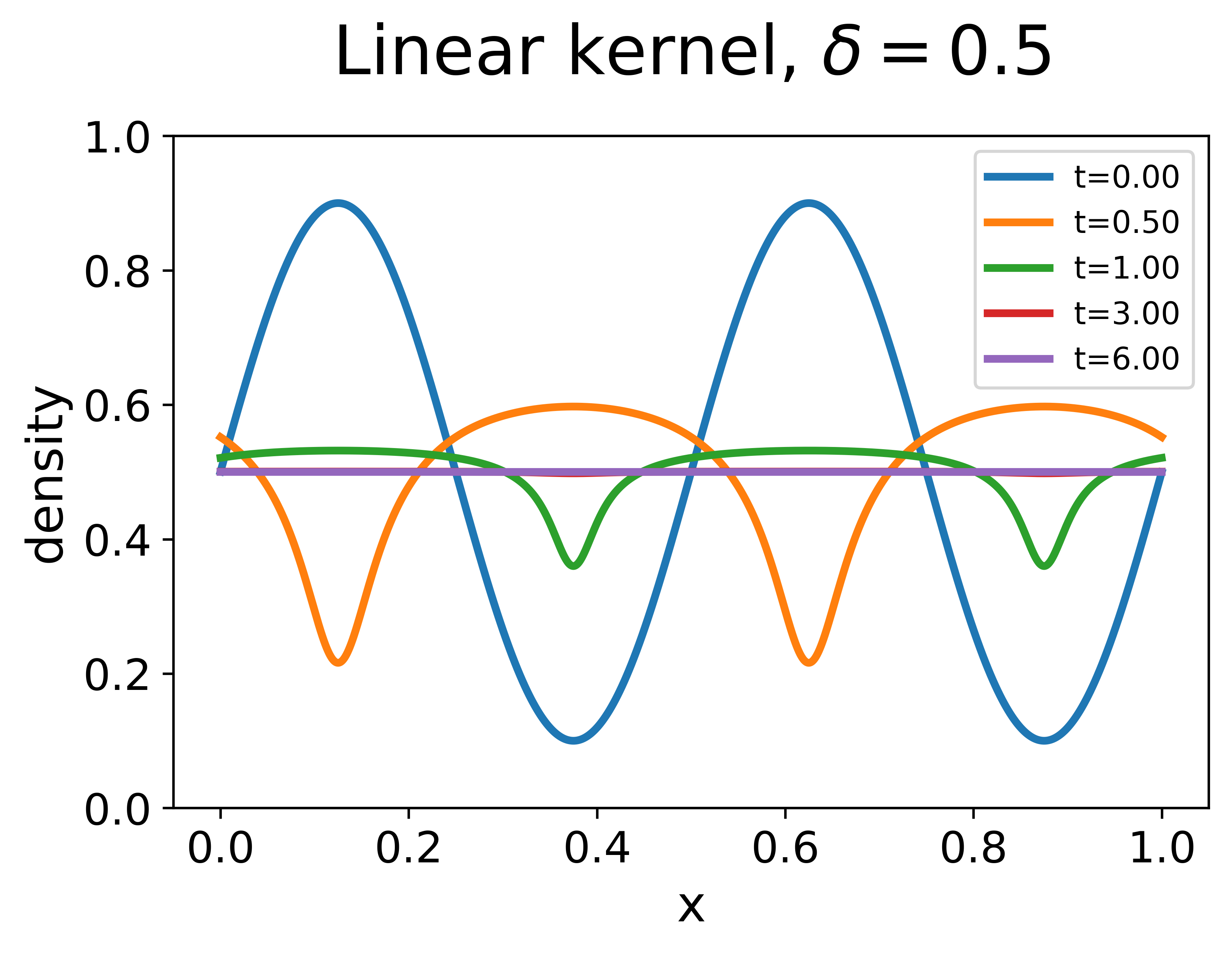}
  \end{subfigure}
  \begin{subfigure}{.32\textwidth}
    \includegraphics[width=\textwidth]{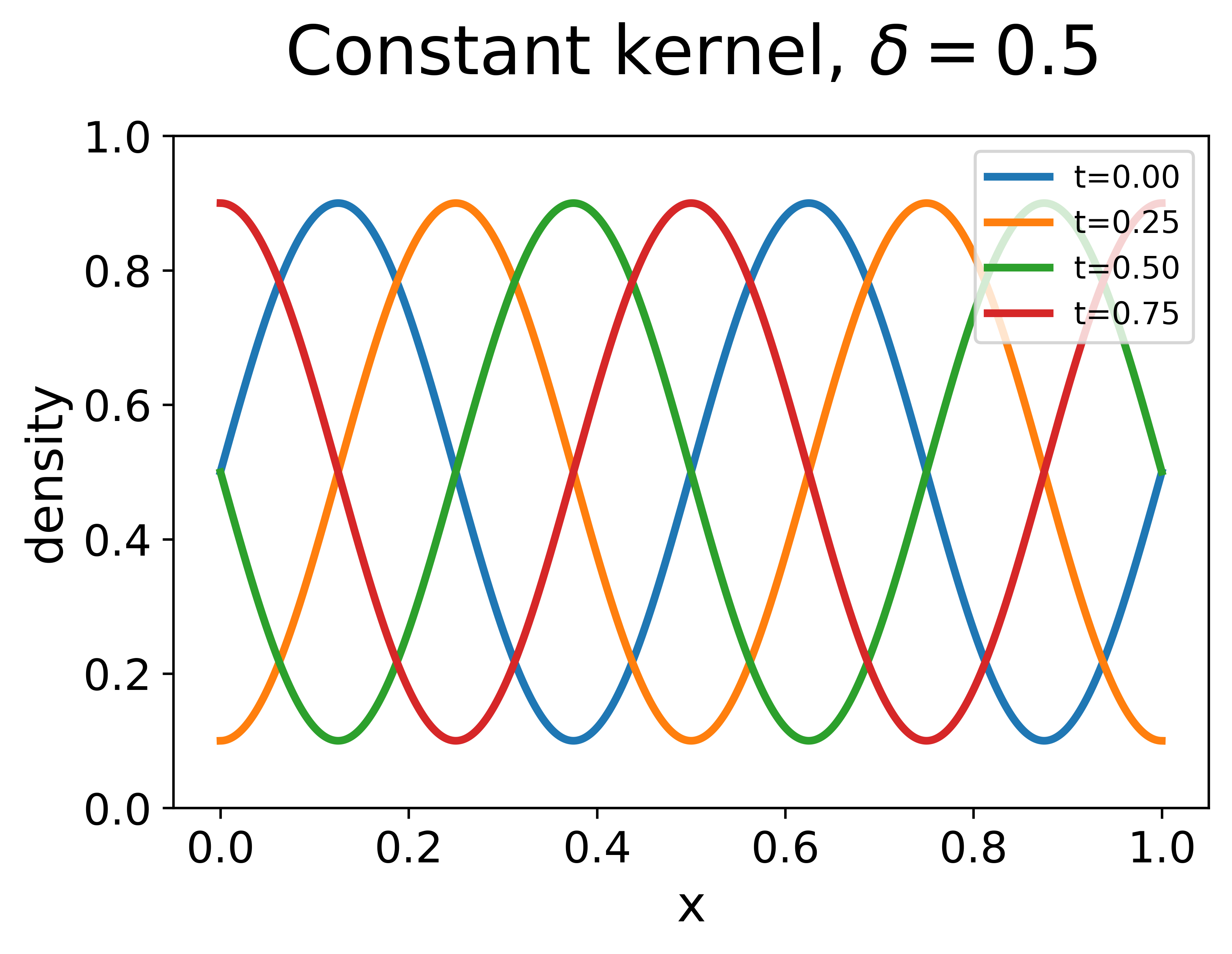}
  \end{subfigure}

  \begin{subfigure}{.32\textwidth}
    \includegraphics[width=\textwidth]{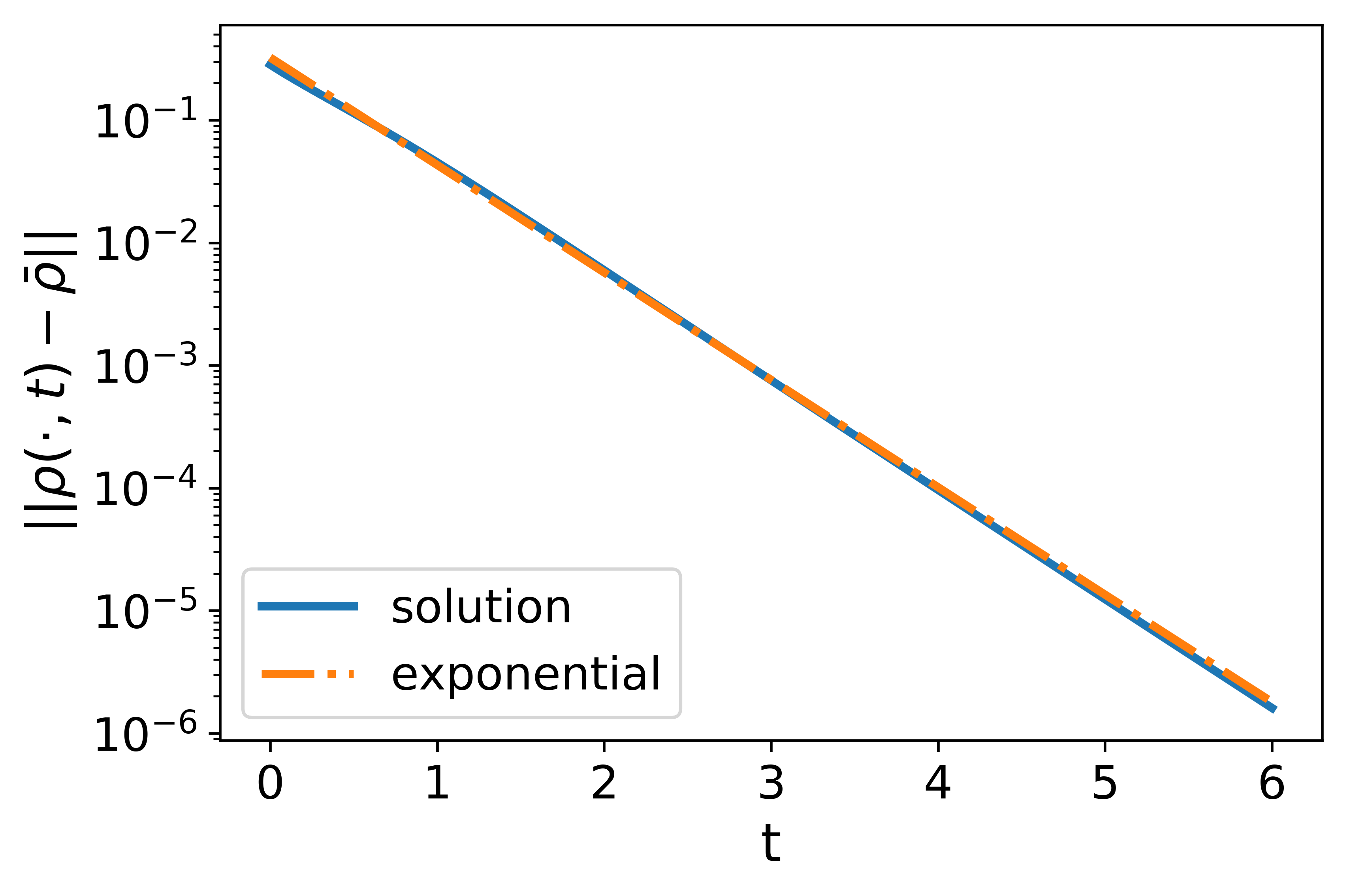}
  \end{subfigure}
  \begin{subfigure}{.32\textwidth}
    \includegraphics[width=\textwidth]{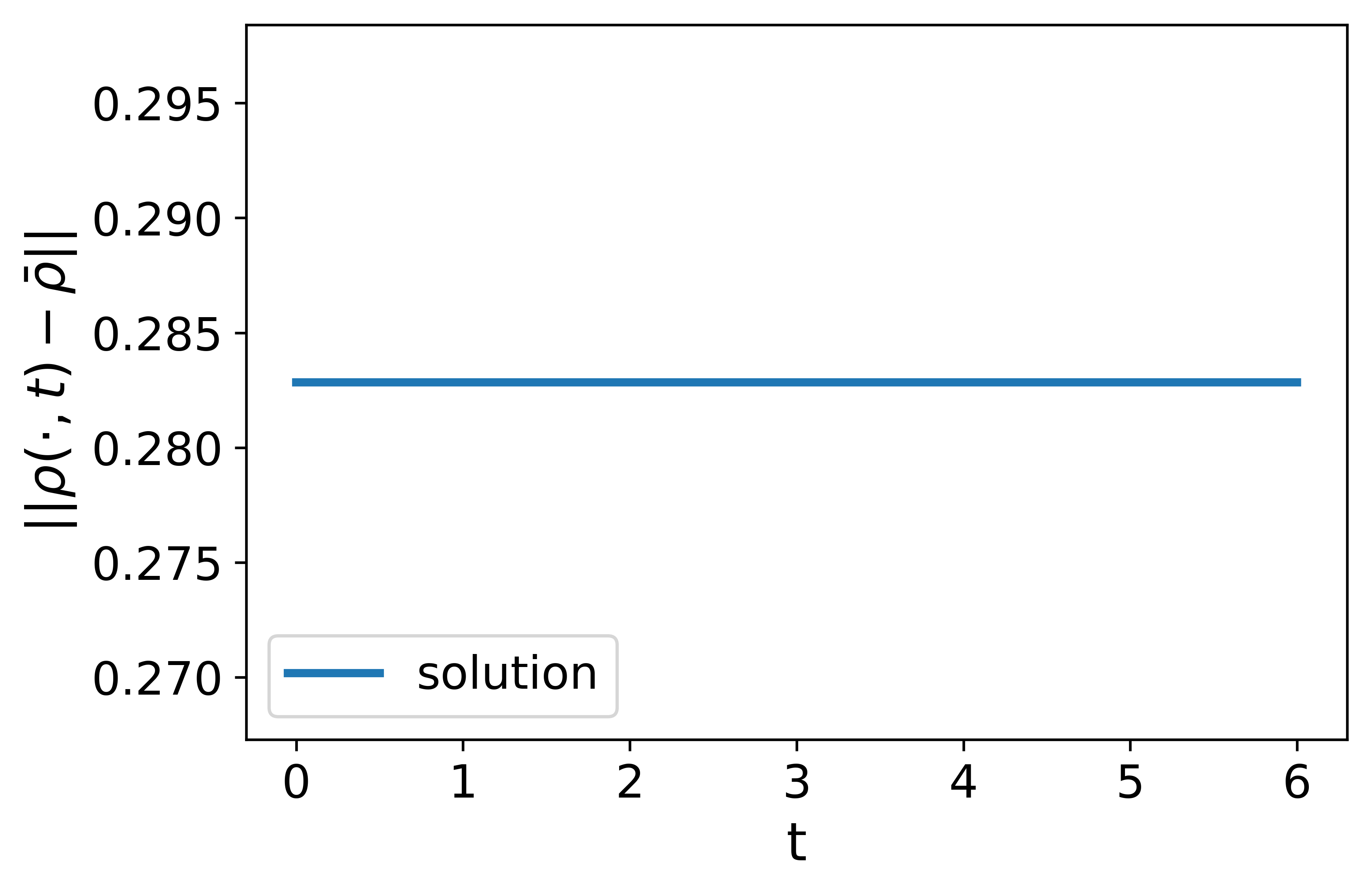}
  \end{subfigure}
  \caption{Compare solutions from different models with the sine-wave initial data}
  \label{fig:sinewave}
\end{figure}

\begin{figure}[htbp]
  \centering
  \begin{subfigure}{.32\textwidth}
    \includegraphics[width=\textwidth]{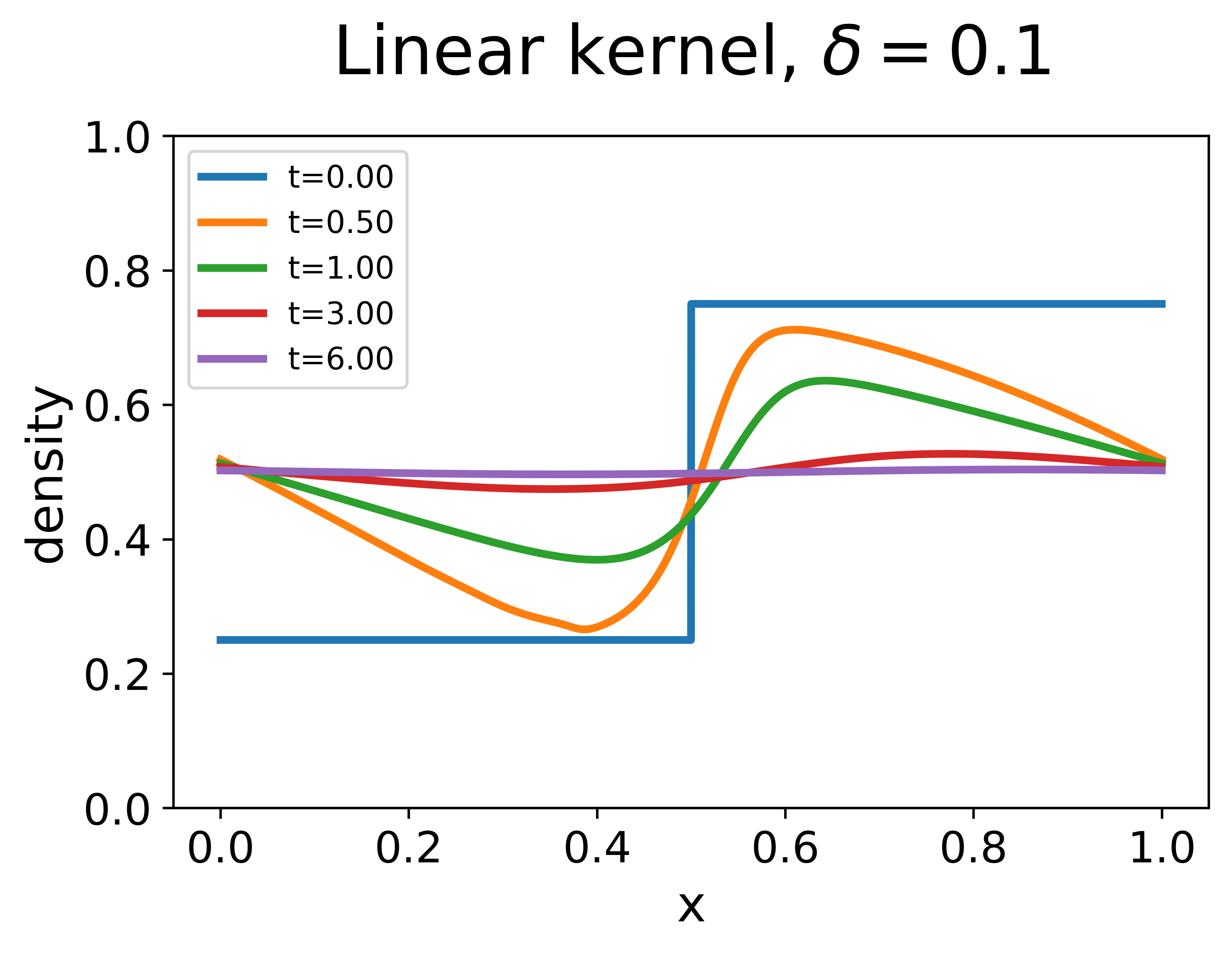}
  \end{subfigure}
  \begin{subfigure}{.32\textwidth}
    \includegraphics[width=\textwidth]{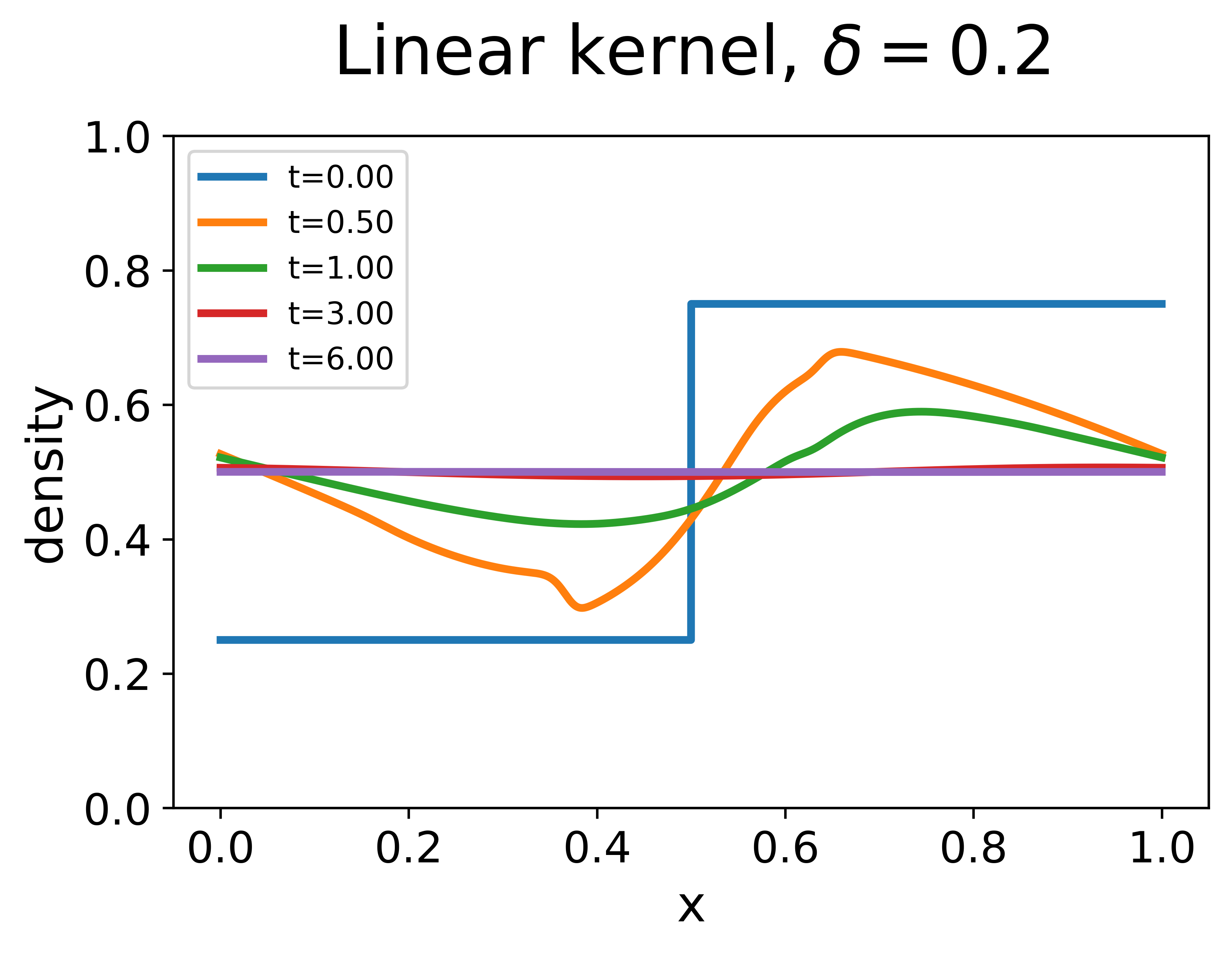}
  \end{subfigure}

  \begin{subfigure}{.32\textwidth}
    \includegraphics[width=\textwidth]{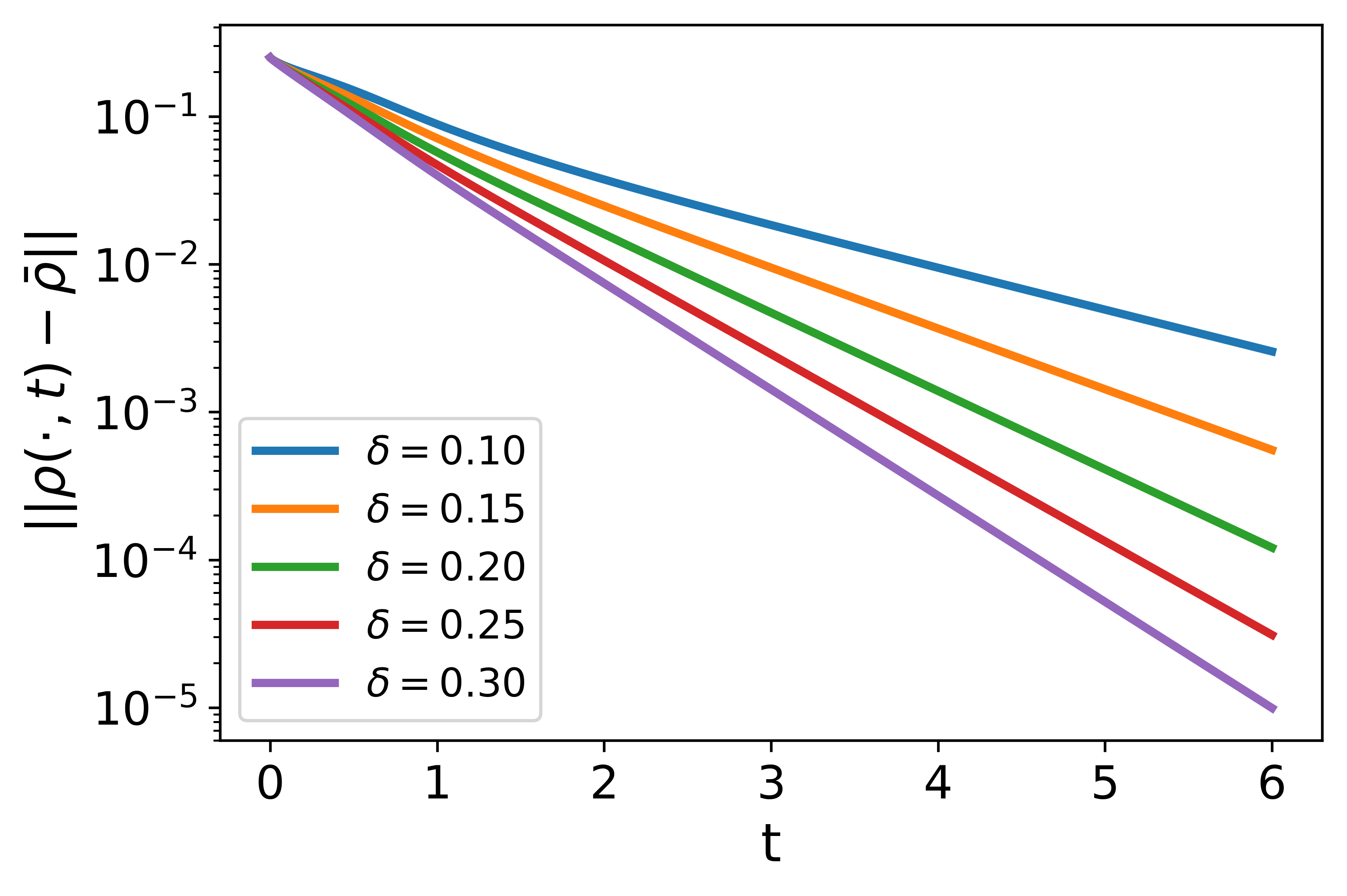}
  \end{subfigure}
  \begin{subfigure}{.32\textwidth}
    \includegraphics[width=\textwidth]{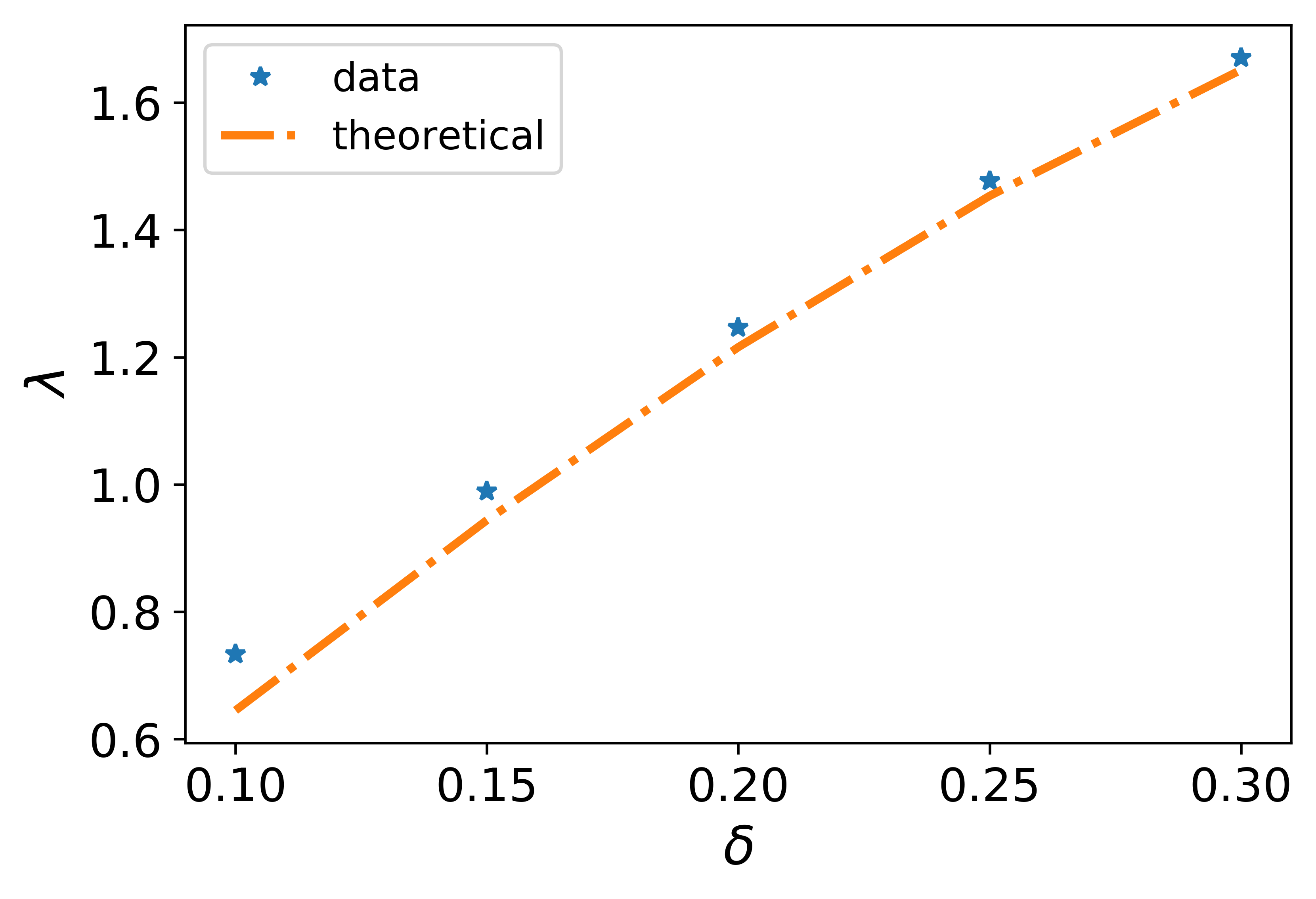}
  \end{subfigure}
  \caption{Compare solutions with different choices of nonlocal range $\delta$}
  \label{fig:piece_const}
\end{figure}

\begin{figure}[htbp]
	\centering
	\begin{subfigure}{.32\textwidth}
    	\includegraphics[width=\textwidth]{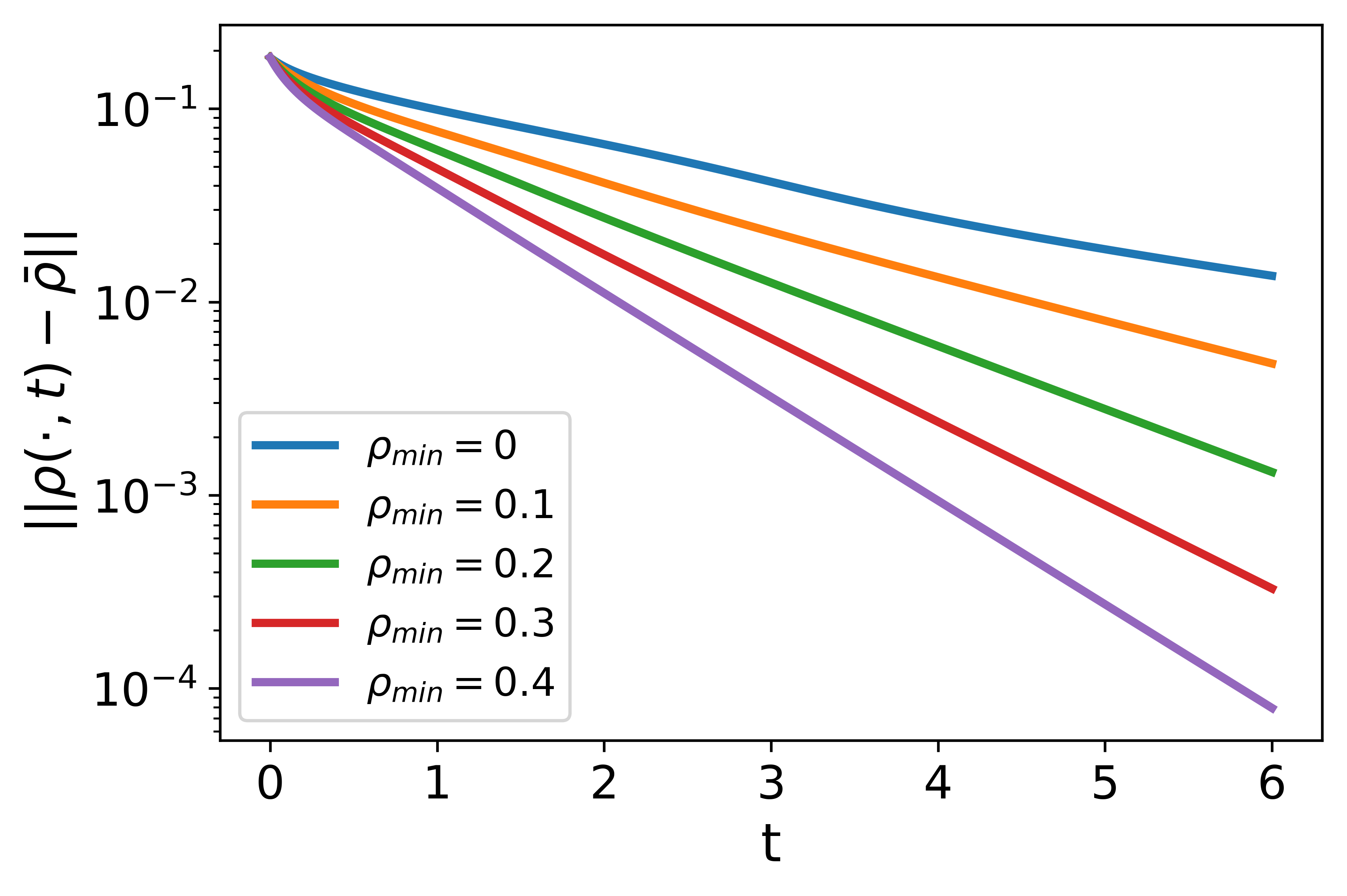}
  	\end{subfigure}
  	\begin{subfigure}{.32\textwidth}
    	\includegraphics[width=\textwidth]{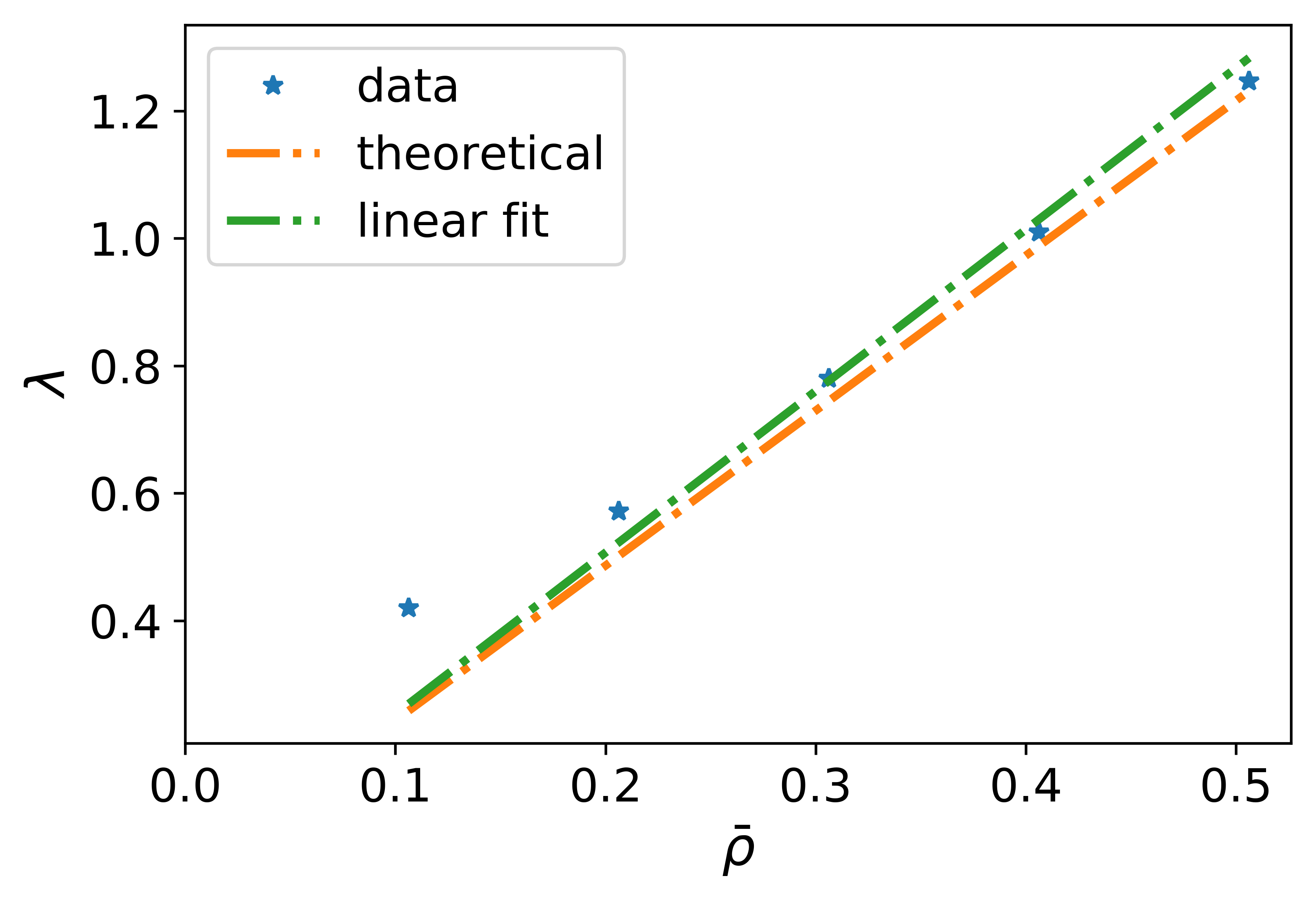}
  	\end{subfigure}
	\caption{Compare solutions with initial data with different mean densities}  
	\label{fig:ini}
\end{figure}

\section{Conclusions and future work}\label{sec:conclusion}

This paper studies global stability of a nonlocal traffic flow model, i.e., the nonlocal LWR that assumes vehicles' velocities depend on the nonlocal traffic density. Mathematically, the model is a scalar conservation law with a nonlocal term. Under some assumptions, we prove that the solution of the nonlocal LWR model converges exponentially to the uniform flow as time goes to infinity. The key assumption is that the nonlocal kernel should be non-increasing and non-constant. It reveals a simple but insightful principle for connected vehicle algorithm design that nearby information should deserve more attention. Indeed, equal attention (as associated with the constant kernel) might allow non-uniform traffic patterns to persist in time. Moreover, the analysis on the parameter dependence also shows the importance of 
choosing suitable ranges of nonlocal information for achieving the best effectiveness in traffic stabilization.

From the mathematical perspective, our proof relies on a couple of assumptions. We believe that the non-increasing and non-constant assumption on the nonlocal kernel plays the key role and other assumptions can be relaxed. For example, we have discussed how the regularity assumption on the solution can be relaxed. Further extensions can also be considered and the same global stability analysis may still be applicable for more general nonlinear desired speed functions as well as variants of other nonlocal macroscopic traffic models. It is also interesting to discuss asymptotic convergence to the uniform flow in other metrics. In particular, it will be interesting to consider
in the future the
$\mathbf{L}^1$  metric, which is popular for local conservation laws, and more general $\mathbf{L}^p$ metrics for  $p\neq 2$.

From the application perspective, it is an interesting question to check the generality of the proposed design principle for connected vehicles in real traffic.
For example, we are currently exploring the possible impact on utilizing   nonlocal information both in space and time.
Moreover, a realistic traffic system can be modeled on different scales using different models. For example, we may consider microscopic traffic models with a given number of discrete vehicles, or nonlocal traffic flow models based on Arrhenius type dynamics or having nudging (``look-behind'') terms. 
In addition, one may study how to extend the findings presented here to other traffic flow models involving both microscopic and macroscopic scales.

\section*{Acknowledgements} We thank the members of the CM3 group at Columbia University for fruitful discussions. 
We also thank the referees for very helpful suggestions.

\bibliographystyle{siam}
\bibliography{ref,nonlocal_conservation_law}
\end{document}